\newtheorem{thm}{Theorem}
\newtheorem{lemma}[thm]{Lemma}
\newtheorem{cor}[thm]{Corollary}
\newtheorem{propn}[thm]{Proposition}
\newtheorem{defn}[thm]{Definition}
\theoremstyle{remark}
\newtheorem{remark}[thm]{Remark}
\newtheorem{example}[thm]{Example}
\newtheorem{discussion}[thm]{Discussion}
\DeclareMathOperator{\height}{ht}
\DeclareMathOperator{\Spec}{Spec}
\DeclareMathOperator{\depth}{depth}
\newcommand{\defeq}{:=}
\DeclareMathOperator{\tor}{Tor}
\DeclareMathOperator{\unm}{Unm}
\DeclareMathOperator{\ass}{Ass}
\newcommand{\naturals}{\mathbb{N}}
\newcommand{\ints}{\mathbb{Z}}
\newcommand{\reals}{\mathbb{R}}
\DeclareMathOperator{\rhomo}{\widetilde H}
\DeclareMathOperator{\reg}{reg}
\DeclareMathOperator{\projdim}{pd}
\DeclareMathOperator{\arank}{ara}
\DeclareMathOperator{\image}{Im}
\newcommand{\serreS}[1]{\ensuremath{(S_{#1})\,}}
\def\thethm{\thesection.\@arabic\c@thm}
\def\theenumi{\@alph\c@enumi}
\def\@lbibitem[#1]#2{\def\@biblab@l{#1}
      \ifthenelse{\equal{#2}{EGA}}{\def \@biblab@l{EGA}}{}
      \ifthenelse{\equal{#2}{M2}}{\def \@biblab@l{\texttt{M2}}}{}
      % below as in the original, but \@biblab@l replaces #1.
      \item[\@biblabel{\@biblab@l}\hfill]\if@filesw
      {\let\protect\noexpand
       \immediate
       \write\@auxout{\string\bibcite{#2}{\@biblab@l}}}\fi\ignorespaces}
\title{Regularity, Depth and Arithmetic Rank of Bipartite Edge Ideals}
\author{Manoj Kummini}
\address{Purdue University\\Lafayette, IN 47907, USA.}
\email{nkummini@math.purdue.edu}
\subjclass[2000]{Primary: 13D05, 13F55}
\begin{document}

\begin{abstract}
We study minimal free resolutions of edge ideals of bipartite graphs.  We
associate a directed graph to a bipartite graph whose edge ideal is
unmixed, and give expressions for the regularity and the depth of the edge
ideal in terms of invariants of the directed graph. For some classes of
unmixed edge ideals, we show that the arithmetic rank of the ideal equals
projective dimension of its quotient.
\end{abstract}

\maketitle

\section{Introduction}
\label{sec:intro}

Let $G$ be a simple graph on a finite vertex set $V$
without any isolated vertices. Let $\Bbbk$ be a
field.  Set $R = \Bbbk[V]$, treating the elements of $V$ as indeterminates.
Let $I$ be the \emph{edge ideal} of $G$ in $R$, \textit{i.e.}, the ideal
generated by the square-free quadratic monomials $xy$, where $x, y \in V$
and there is an edge between $x$ and $y$ in $G$. In this paper, we study
(Castelnuovo-Mumford) regularity, depth and arithmetic rank of edge ideals
of bipartite graphs. Recall that $G$ is said to be \emph{bipartite} if
there exists a partition $V = V_1 \bigsqcup V_2$ such that every edge in
$G$ is of the form $xy$ with $x \in V_1$ and $y \in V_2$. 

When $I$ is unmixed (more generally, when $G$ has a perfect matching ---
see Section~\ref{sec:edgeIdeals} for details), we have that $|V_1| = |V_2|
= \height I$. To such a bipartite graph, we associate a directed graph
$\mathfrak d_G$ on the vertex set $\{1, \ldots, \height I\}$. This is
motivated by a paper of J.~Herzog and T.~Hibi~\cite{HeHiCMbip05} which
studies a similar association between posets and bipartite graphs with
Cohen-Macaulay edge ideals. Using this, we show that 
\def\unmBipRegIsrIThm{Let $G$ be an unmixed bipartite graph
with edge ideal $I$. Then $\reg R/I = \max\{|A| : A \ \text{is an antichain
in}\ \mathfrak{d}_G\}$. In particular, $\reg R/I$ is the maximum size of a
pairwise disconnected set of edges in $G$.} 
\begin{thm}
\label{thm:unmBipRegIsrI}
%Let $G$ be an unmixed bipartite graph with edge ideal $I$. Then $\reg R/I =
%\max\{|A| : A \in \mathcal A_{\mathfrak{d}_G}\}$. In particular, $\reg R/I
%= r(I)$.
\unmBipRegIsrIThm
\end{thm}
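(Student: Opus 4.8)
The plan is to establish three inequalities which together force equality, and to read off the ``in particular'' clause as a restatement of the definitions. Write $h = \height I$, and use unmixedness to fix a perfect matching, labelling $V_1 = \{x_1, \dots, x_h\}$ and $V_2 = \{y_1, \dots, y_h\}$ so that $x_i y_i \in E(G)$ for every $i$ and so that $\mathfrak{d}_G$ records the relation $x_i y_j \in E(G)$; since this relation is transitive for an unmixed bipartite graph, I may regard $\mathfrak{d}_G$ as encoding a partial order $\le$ on $\{1, \dots, h\}$, writing $i \le j$ when $x_i y_j \in E(G)$. I would first dispose of the combinatorics. If $A$ is an antichain, then for distinct $i, j \in A$ neither $x_i y_j$ nor $x_j y_i$ is an edge, so the diagonal edges $\{x_i y_i : i \in A\}$ are pairwise disconnected, giving $\max|A| \le \operatorname{ind-match}(G)$. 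Conversely, given a pairwise disconnected set of edges $x_{i_a} y_{j_a}$, I would show its $V_1$-endpoints $\{i_a\}$ form an antichain: if $i_a \le i_b$ for some $a \neq b$, then transitivity applied to $i_a \le i_b \le j_b$ produces the edge $x_{i_a} y_{j_b}$, connecting two edges of the matching and contradicting that it is induced. Hence the maximum antichain size and the maximum size of a pairwise disconnected set of edges coincide, and it remains only to identify this common number with $\reg R/I$.

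For the lower bound $\reg R/I \ge \operatorname{ind-match}(G)$ I would invoke Katzman's inequality: an induced matching of size $k$ is the edge set of an induced subgraph consisting of $k$ disjoint edges, whose edge ideal is a complete intersection of $k$ quadrics and hence has $\reg R/I = k$; since regularity does not decrease when passing to an induced subgraph, $\reg R/I \ge k$. Combined with the previous paragraph, the whole theorem now reduces to the single upper bound $\reg R/I \le \max\{|A| : A \text{ an antichain}\}$, since the chain $\max|A| \le \operatorname{ind-match}(G) \le \reg R/I \le \max|A|$ would then collapse to equalities.

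I would prove the upper bound by induction on $h$ using the standard short exact sequence. Choose a minimal element $p$ of $\mathfrak{d}_G$ and put $v = x_p$; minimality forces $N(y_p) = \{x_p\}$, so $y_p$ is a leaf. From
\[
0 \longrightarrow \bigl(R/(I : x_p)\bigr)(-1) \xrightarrow{\ \cdot\, x_p\ } R/I \longrightarrow R/(I, x_p) \longrightarrow 0
\]
I obtain $\reg R/I \le \max\{\,1 + \reg R/(I : x_p),\ \reg R/(I, x_p)\,\}$. The term $R/(I, x_p)$ is the edge-ideal quotient of $G \setminus x_p$; because $y_p$ is isolated there, it agrees with the quotient for the unmixed bipartite graph on $\mathfrak{d}_G \setminus \{p\}$, whose maximum antichain is no larger than that of $\mathfrak{d}_G$, so induction controls this term. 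The term $R/(I : x_p)$ is the edge-ideal quotient of the induced subgraph $H = G \setminus N[x_p]$, and the leaf property is exactly what I need for the bookkeeping: the assignment $M \mapsto M \cup \{x_p y_p\}$ carries an induced matching of $H$ to one of $G$, whence $\operatorname{ind-match}(H) + 1 \le \operatorname{ind-match}(G) = \max|A|$, and so $1 + \reg R/(I : x_p) \le \max|A|$ as soon as the inductive bound $\reg R/(I : x_p) \le \operatorname{ind-match}(H)$ is in hand.

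The main obstacle is precisely this last point: $H = G \setminus N[x_p]$ need not itself be unmixed bipartite, so the induction cannot be run naively inside that class. I would resolve this by strengthening the inductive statement to the class of all induced subgraphs of unmixed bipartite graphs, proving $\reg R/I \le \operatorname{ind-match}$ throughout; this class is closed under both operations $G \rightsquigarrow G \setminus x_p$ and $G \rightsquigarrow G \setminus N[x_p]$, and the leaf at a minimal element of $\mathfrak{d}_G$ serves as the shedding vertex that keeps the matching bookkeeping tight. The delicate part, where the order structure of $\mathfrak{d}_G$ must be used most carefully, is verifying that a usable shedding vertex (a leaf, or a vertex whose closed-neighbourhood deletion lowers the induced matching number by exactly one) survives in every such induced subgraph; the base case of no edges is immediate.
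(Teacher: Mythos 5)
Your combinatorial equivalence between maximum antichains of $\mathfrak d_G$ and pairwise disconnected (induced-matching) edge sets is correct, and your lower bound $\reg R/I \geq r(I)$ is the same one the paper uses (Lemma~\ref{thm:rIIsAtLeastKappaG} together with the Katzman reference). The upper bound, which is the real content, has two genuine problems. First, your setup silently assumes $G$ is Cohen--Macaulay: for an unmixed bipartite graph $\mathfrak d_G$ is transitively closed but need not be acyclic (Theorem~\ref{thm:VillarrealHerzogHibiBipGraphs}), so $\succcurlyeq$ is only a preorder, not a partial order. For $G = K_{c,c}$ with $c \geq 2$, which is unmixed, $\mathfrak d_G$ is the complete digraph: no index $p$ satisfies $N(y_p) = \{x_p\}$, there is no leaf, and your induction cannot start. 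The paper gets around exactly this by collapsing the strong components of $\mathfrak d_G$ into the acyclic reduction $\widehat G$ and proving $\reg R/I = \reg S/\widehat I$ by a polarization/specialization argument (Proposition~\ref{thm:bipPDimWeighedDepth}, Remark~\ref{rmk:regOfUnmGrAndAcycRedn}, Lemma~\ref{thm:maxSizeOfAntiChAcycRedn}); some such reduction is unavoidable in your scheme as well.

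Second, and fatally, the strengthened induction hypothesis you propose --- that $\reg R/I \leq r(I)$ for \emph{every} induced subgraph of an unmixed bipartite graph --- is false. Every bipartite graph is an induced subgraph of its whiskering (attach a pendant edge at each vertex), which is a Cohen--Macaulay, hence unmixed, bipartite graph; taking the eight-cycle, which the paper itself cites just before the proof, gives $\reg R/I = 3$ while $r(I) = 2$. So the step you defer as ``delicate'' is not bookkeeping: the class you chose is too large and the statement you would be proving on it is wrong. A correct induction of this flavour does exist (essentially Van~Tuyl's argument for sequentially Cohen--Macaulay bipartite graphs), but it hinges on exhibiting a class that is genuinely closed under both $G \mapsto G \setminus x$ and $G \mapsto G \setminus N[x]$ and on which the bound holds, and that identification is the entire missing content. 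The paper avoids induction altogether: it combines Terai's formula $\reg R/I = \projdim I^\star$ (Proposition~\ref{thm:TeraiPdimReg}) with the Herzog--Hibi explicit minimal free resolution of $I^\star$ for Cohen--Macaulay bipartite graphs, which gives $\projdim I^\star = \max\{|A| : A \in \mathcal A_{\mathfrak d_G}\}$ directly.
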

We say that $G$ is \emph{unmixed} (respectively, \emph{Cohen-Macaulay}) if
$R/I$ is unmixed (respectively, Cohen-Macaulay).  The notion of pairwise
disconnected sets of edges in graphs was introduced by
X.~Zheng~\cite{Zhereslnfacets04} who showed that if $I$ is the edge ideal
of a tree (an acyclic graph) then $\reg R/I$ is the maximum size of a
pairwise disconnected set of edges~\cite[Theorem~2.18]{Zhereslnfacets04}.
Additionally, see~\cite[Corollary~6.9]{HvThypergraphs08}, for the same
conclusion for the edge ideals of chordal graphs.  For arbitrary graphs,
the maximum size of a pairwise disconnected set of edges is a lower bound
for $\reg R/I$; this follows essentially
from~\cite[Lemma~2.2]{KatzmanCharIndep06}. 

A \emph{strong component} of a directed graph is a set of vertices maximal
with the property that for every $i, j$ in the set, there is a directed
path from $i$ to $j$. The following statement about depth, which follows
from Corollary~\ref{thm:depthBdInTheZeta}, has also been observed
by C.~Huneke and M.~Katzman:
\def\unmBipDepthWeakVersion{Let $G$ be an unmixed bipartite graph, with
edge ideal $I$ and associated directed graph $\mathfrak d_G$. If $\mathfrak
d_G$ has $t$ strong components, then $\depth R/I \geq t$.}
\begin{thm}
\label{thm:unmBipDepthWeakVersion}
\unmBipDepthWeakVersion
\end{thm}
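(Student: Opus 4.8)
The plan is to produce an explicit regular sequence of length $t$ on $R/I$, and I would begin by recording the combinatorial description of the primes involved. Since $I$ is a square-free monomial ideal, $R/I$ is reduced, so $\ass(R/I)$ equals the set of minimal primes, which correspond to the minimal vertex covers of $G$. Because $G$ has the perfect matching $x_iy_i$, each minimal vertex cover selects exactly one of $x_i,y_i$ for every $i$; writing $S$ for the set of indices at which $y_i$ is selected, the requirement that all remaining edges be covered translates into $S$ being closed under the edges of $\mathfrak d_G$. Such sets are exactly the unions of strong components that form a filter of the condensation of $\mathfrak d_G$. Hence $\ass(R/I)=\{P_S\}$ is indexed by the filters of the poset of the $t$ strong components; in particular all minimal primes have height $\height I$, so $\dim R/I=\height I$ and $t\le\height I$, making the bound $\depth R/I\ge t$ nonvacuous.

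Next I would exhibit the candidate sequence. For any index $i$, the linear form $x_i-y_i$ lies in $P_S$ only if $i\notin S$ (otherwise $x_i\notin P_S$) and simultaneously $i\in S$ (otherwise $y_i\notin P_S$), which is impossible; thus $x_i-y_i$ avoids every associated prime and is a nonzerodivisor on $R/I$. Choosing one representative $i_r$ from each strong component $C_r$, I would claim that $\theta_r\defeq x_{i_r}-y_{i_r}$ for $r=1,\dots,t$ form a regular sequence, which yields $\depth R/I\ge t$ at once. The motivation is that, using the characterization of unmixed bipartite graphs, each strong component induces a complete bipartite subgraph whose edge ring has depth one; setting $x_{i_r}=y_{i_r}$ collapses the diagonal of this block, and one expects exactly one depth-contributing direction per component, so that distinct components together furnish $t$ independent regular elements.

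The main obstacle is verifying the regular-sequence property, since after the first step the quotient $R/(I,\theta_1)$ is no longer square-free and its associated primes can no longer be read off from vertex covers. I would therefore induct, and at the $r$-th stage control $\ass\bigl(R/(I,\theta_1,\dots,\theta_{r-1})\bigr)$ directly, showing that none of these primes contains $x_{i_r}-y_{i_r}$; the real content is that collapsing diagonals in \emph{distinct} components creates no embedded prime meeting the next diagonal, whereas a second collapse inside a single component would, which is precisely why the length is $t$ and not larger. A cleaner route to the same estimate, which I would pursue in parallel, is to compare $R/I$ with the edge ring $\bar R/\bar I$ of the condensation poset: the latter is the Herzog--Hibi Cohen--Macaulay case, of depth $t$, realized as the acyclic quotient of $\mathfrak d_G$, and one can try to present $R/I$ as an algebra over $\bar R/\bar I$ whose structure map is flat with fibres of nonnegative depth, giving $\depth R/I\ge\depth\bar R/\bar I=t$. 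Either way the crux is identical, namely isolating exactly one depth-contributing direction per strong component.
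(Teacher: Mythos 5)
Your setup is sound: the description of $\ass R/I$ via filters of the condensation of $\mathfrak d_G$ agrees with Lemma~\ref{thm:assIandAntiCh}, and the observation that each diagonal form $x_i-y_i$ avoids every associated prime of $R/I$ (hence is a nonzerodivisor) is correct. But there is a genuine gap exactly where you yourself locate ``the main obstacle'' and ``the real content'': you never establish that $\theta_1,\dots,\theta_t$ is a regular sequence beyond its first term. Once you pass to $R/(I,\theta_1,\dots,\theta_{r-1})$ the ring is no longer defined by monomials, its associated primes can no longer be read off from vertex covers, and you supply no mechanism --- no stated induction hypothesis, no control of the new and possibly embedded primes --- for showing that they all avoid $x_{i_r}-y_{i_r}$. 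The parallel route via a ``flat presentation of $R/I$ over the condensation edge ring with fibres of nonnegative depth'' is likewise only a wish: no such structure map is constructed and there is no a priori reason one exists. As written, the argument proves only $\depth R/I\ge 1$.

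For comparison, the paper does not attempt to exhibit a regular sequence at all. It first proves the exact formula of Corollary~\ref{thm:depthBdInTheZeta}, namely $\depth R/I=c-\max\bigl\{\sum_{i\in B}\zeta_i-|B| : B\ \text{an antichain of}\ \widehat{\mathfrak d}\bigr\}$, where the $\zeta_i$ are the sizes of the strong components; this rests on Terai's formula, the Herzog--Hibi multigraded resolution for the Cohen--Macaulay acyclic reduction, a (de)polarization argument relating $I^\star$ to $(\widehat I)^\star$ (Proposition~\ref{thm:bipPDimWeighedDepth}), and Auslander--Buchsbaum. Given that formula, the inequality $\depth R/I\ge t$ is the one-line check $t-|B|\le\sum_{i\notin B}\zeta_i$. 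To salvage your approach you would need to actually carry out the induction on the associated primes of the non-monomial quotients; deducing the regular-sequence property from a depth bound instead would be circular, since the depth bound is precisely what is to be proved.
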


The problem of determining the minimum number of equations required to
generate a monomial ideal up to radical (called the \emph{arithmetic rank}
of the ideal) was first studied by
P.~Schenzel and W.~Vogel~\cite{ScVoSTCI77}, T.~Schmitt and
Vogel~\cite{ScmVoSTCI79} and G.~Lyubeznik~\cite{LyubArithRk88}.
Lyubeznik showed that for a square-free monomial ideal $I$, $\arank I \geq
\projdim R/I$~\cite[Proposition ~3]{LyubArithRk88}. Upper bounds for arithmetic rank have also been considered
by M.~Barile~\cite{BariNoEqnsCertain96}
and~\cite{BariMonomialIdealsNote06}, building on the work of Schmitt and
Vogel mentioned above. In~\cite{KTYsmallADeg08}, K.~Kimura,
N.~Terai and K.-i.~Yoshida raise the question of equality of $\arank I$ and
$\projdim R/I$, and answer it in some
cases~\cite[Theorem~1.1]{KTYsmallADeg08}. It is known, however, due to
Z.~Yan~\cite[Example~2]{YanEtaleGorMcp00} that, in general, $\projdim R/I$
and $\arank I$ need not be equal. 

If $G$ is an unmixed bipartite graph, then we can construct a maximal
subgraph $\breve G$ which is Cohen-Macaulay; this corresponds to taking a
maximal directed acyclic subgraph of $\mathfrak d_G$. If $G$ is, further,
Cohen-Macaulay, then $\breve G = G$. Let $\breve I$ be the edge ideal of
$\breve G$. We show that
\def\thmCMBipSTCI{Let $G$ be an unmixed bipartite graph with edge
ideal $I$. Then $\arank I \leq \arank \breve I + \projdim R/I - \height I$.
If further a maximal acyclic subgraph of $\mathfrak d_G$ can be embedded in
$\naturals^2$, then $\arank I = \projdim R/I$.}
\begin{thm}
\label{thm:compatLinznImpliesSTCI}
\thmCMBipSTCI
\end{thm}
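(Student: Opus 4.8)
The plan is to reduce everything to a Schmitt--Vogel--type generation-up-to-radical argument, using the Cohen--Macaulay subgraph $\breve G$ as a scaffold. First I would record the homological input. Both $G$ and $\breve G$ contain the perfect matching $\{x_iy_i\}$, so $\height \breve I = \height I =: h$, and since $\breve G$ is Cohen--Macaulay the Auslander--Buchsbaum formula gives $\projdim R/\breve I = h$. Consequently the quantity $\projdim R/I - \height I$ appearing in the statement is exactly $\projdim R/I - \projdim R/\breve I \ge 0$. This reframes the first inequality as the assertion that adjoining the edges of $G$ not already in $\breve G$ costs at most $\projdim R/I - \projdim R/\breve I$ extra elements up to radical.

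For the inequality, I would start from a system $f_1,\ldots,f_{\arank \breve I}$ with $\sqrt{(f_1,\ldots,f_{\arank \breve I})} = \breve I$ and adjoin the monomials $u_e = x_iy_j$ for the edges $e$ of $G$ not in $\breve G$. These are precisely the directed edges of $\mathfrak d_G$ deleted to form the maximal acyclic subgraph corresponding to $\breve G$, so each $u_e$ lies inside a single strong component of $\mathfrak d_G$. The goal is to sort the $u_e$ into a Schmitt--Vogel filtration $Q_1,\ldots,Q_d$ in which, for any two distinct $u_e,u_{e'}$ in the same group, the product $u_eu_{e'}$ lies in the ideal generated by $\breve I$ and the lower groups; then the sums $\sum_{e\in Q_l} u_e$ together with the $f_i$ generate $I$ up to radical, giving $\arank I \le \arank \breve I + d$. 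I would take the grouping from the condensation (strong-component) poset of $\mathfrak d_G$ and then identify $d$ with $\projdim R/I - h$ using the depth computation behind Corollary~\ref{thm:depthBdInTheZeta}. Matching this combinatorial count with the homological jump $\projdim R/I - \projdim R/\breve I$ is the delicate bookkeeping step.

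For the equality, the hypothesis that a maximal acyclic subgraph of $\mathfrak d_G$ embeds in $\naturals^2$ is used to pin down $\arank \breve I$. Such an embedding realizes the associated poset $P$ as a two-dimensional poset, with order-coordinates $\rho,\sigma\colon P \to \naturals$ satisfying $i \le_P j \iff \rho(i)\le\rho(j)$ and $\sigma(i)\le\sigma(j)$, so the generators $x_iy_j$ of $\breve I$ sit on a staircase region of $\naturals^2$. On such a region the two coordinates furnish a level function under which these generators sort into exactly $h$ Schmitt--Vogel groups satisfying the product condition, producing $h$ elements whose radical is $\breve I$; hence $\arank \breve I = h = \projdim R/\breve I$, i.e.\ $\breve I$ is a set-theoretic complete intersection. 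Feeding $\arank \breve I = h$ into the first part yields $\arank I \le h + (\projdim R/I - h) = \projdim R/I$, and Lyubeznik's inequality $\arank I \ge \projdim R/I$ gives equality.

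The main obstacle is that both halves are really Schmitt--Vogel accounting problems whose counts must come out exactly. In the inequality the hard point is showing that the deleted cycle-edges can be absorbed into \emph{precisely} $\projdim R/I - h$ groups and no more; this requires the explicit depth/projective-dimension formula in terms of $\mathfrak d_G$ to control $d$, together with a verification of the product condition for the chosen grouping. In the equality the hard point is the staircase construction on the $\naturals^2$-embedded poset: one must check that the level function satisfies the Schmitt--Vogel hypotheses and collapses to exactly $h$ elements, which is precisely where two-dimensionality (as opposed to higher poset dimension) is indispensable.
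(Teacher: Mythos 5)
Your overall architecture matches the paper's: Lyubeznik's bound $\arank I \geq \projdim R/I$, a proof that $\arank \breve I = \height I$ in the Cohen--Macaulay case using the $\naturals^2$-embedding, and a bound of $\projdim R/I - \height I$ on the cost of the deleted edges. But both of the constructions that carry the actual content are missing, and the places where you gesture at them do not work as stated. For the inequality, your proposed grouping ``from the condensation poset of $\mathfrak d_G$'' cannot be the right one: as you yourself note, every deleted edge lies \emph{inside} a single strong component, so the condensation poset induces no partition of these edges into $\projdim R/I - \height I$ classes (a component $\mathcal Z_a$ contributes $\binom{\zeta_a}{2}$ deleted edges that must be absorbed into only $\zeta_a - 1$ classes). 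The paper instead puts a partial order directly on the set of deleted edges ($x_jy_i > x_{j'}y_{i'}$ iff $j \vartriangleright j'$ and $i \vartriangleright i'$ strictly), shows via Corollary~\ref{thm:depthBdInTheZeta} that the maximum antichain has exactly $\xi = \projdim R/I - \height I$ elements, and applies Dilworth's theorem to cover this poset by $\xi$ chains whose sums are the new generators. Note also that the verification is \emph{not} a Schmitt--Vogel product check: for two comparable deleted edges in a Dilworth chain the product need not be divisible by an element of $\breve I$ or of a ``lower'' group, and the paper instead argues directly that any prime containing $\breve I$ and the chain sums contains $I$, by choosing a minimal purported counterexample along a chain (Proposition~\ref{thm:araBreveIPlusDiff}).

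For the equality, the sentence ``the two coordinates furnish a level function under which these generators sort into exactly $h$ Schmitt--Vogel groups'' is precisely the theorem, not an argument. No naive level function built from the two coordinates (e.g., their sum) yields only $\height I$ levels -- a priori one gets up to $2c-1$. The paper has to build two specific linearizations $\gamma$ and $\rho$ from the embedding (Definition~\ref{disc:rowAndColOrderingPosetsDefn}), place the pairs $(i,j)$ with $j \succcurlyeq i$ on a grid, join them by a staircase rule (Discussion~\ref{disc:defnQuadFormsSTCI}), and prove that the resulting graph has exactly $c$ connected components (Lemma~\ref{thm:compatLinznGammaCpnts}); the $c$ elements are the sums over components. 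Even then, the radical computation is done not via the Schmitt--Vogel lemma but by induction on $c$, splitting primes according to whether they contain $x_1$ and reducing to the deletion $(I,x_1)$ and the colon $(I:x_1)$, whose associated posets inherit embeddings. You have correctly identified where the difficulty lies, but the two combinatorial constructions -- the antichain bound plus Dilworth cover of the deleted edges, and the staircase component decomposition coming from the pair of linearizations -- are the proof, and they are absent.
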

Thus, if $G$ is Cohen-Macaulay and $\mathfrak d_G$ has an embedding in
$\naturals^2$, then $R/I$ is a set-theoretic complete intersection,
\textit{i.e.}, it can be defined by $\height I$ equations.

The next section contains definitions, notation and some preliminary
observations. Theorems~\ref{thm:unmBipRegIsrI}
and~\ref{thm:unmBipDepthWeakVersion} are proved in
Section~\ref{sec:regDepthBipGr}. A proof of
Theorem~\ref{thm:compatLinznImpliesSTCI} is presented in
Section~\ref{sec:arithRk}.

\section{Edge Ideals}
\label{sec:edgeIdeals}

We fix the following notation: $\Bbbk$ is a field, $V$ is a
finite set of indeterminates over $\Bbbk$, $G$ is a simple graph on $V$ 
without any isolated vertices and
$R = \Bbbk[V]$ is a polynomial ring. We take
$I \subseteq R$ to be a square-free monomial ideal; later, we will assume
that $I$ is the edge ideal of $G$. Set $c \defeq \height I$. References for
homological aspects of monomial ideals, for graph theory and for results on
posets,
respectively, are~\cite[Part I]{MiStCCA05},~\cite{WestGraphTheory96}
and~\cite[Chapter~3]{StanEC1}. We will use ``multigraded'' and
``multidegree'' to refer to the grading of $R$ by $\naturals^{|V|}$ and the
degrees in this grading.

The \emph{multigraded Betti numbers} of $R/I$ are $\beta_{l,\sigma}(R/I)
\defeq \dim_\Bbbk \tor_l^R(\Bbbk,R/I)_\sigma$. For $j \in \ints$, the
($\naturals$-graded) \emph{Betti numbers} are $\beta_{l,j} \defeq
\dim_\Bbbk \tor_l^R(\Bbbk,R/I)_j$. We note that $\beta_{l,j}(\cdot) =
\sum\beta_{l, \sigma}(\cdot)$, where the sum is taken over the set of
$\sigma$ with $|\sigma|=j$. (Here $|.|$ denotes the total degree of a
multidegree.) To represent a multidegree, we will often use the unique
monomial in $R$ of that multidegree; further, if that monomial is
square-free, we will use its support, \textit{i.e.}, the set of
variables dividing it. 

Let $\Delta$ be the Stanley-Reisner complex of $I$. The correspondence
between non-faces of $\Delta$ and monomials in $I$ can also be expressed as
follows: for any monomial prime ideal $\mathfrak p \in \Spec R$, $I
\subseteq \mathfrak p$ if and only if $\mathfrak p = (\bar F)R$, the ideal
generated by $\bar F \defeq V \setminus F$, for some $F \in
\Delta$~\cite[Theorem~1.7]{MiStCCA05}. Thus minimal prime ideals of $R/I$
correspond to complements of maximal faces of $\Delta$. The \emph{Alexander
dual} of $\Delta$, denoted $\Delta^\star$, is the simplicial complex
$\{\bar F : F \not \in \Delta\}$.  Let $m \in \naturals$ and $F_i \subseteq
V, 1 \leq i \leq m$ be such that $\prod_{x \in F_i} x, 1 \leq i \leq m$ are
the minimal monomial generators of $I$. The \emph{Alexander dual} of $I$,
denoted $I^\star$, is the square-free monomial ideal $\cap_{i=1}^m (F_i)$.
If $I$ is the Stanley-Reisner ideal of $\Delta$, then $\bar F_i, 1 \leq i
\leq m$ are precisely the facets of $\Delta^\star$. Hence $I^\star$ is the
Stanley-Reisner ideal of $\Delta^\star$. We will need the following theorem
of Terai:
\begin{propn}
[Terai~\protect{\cite{Terai99regdual}};~\protect{\cite[Theorem
5.59]{MiStCCA05}}] 
\label{thm:TeraiPdimReg}
For any square-free monomial ideal $J$, $\projdim R/J = \reg J^\star$.
\hfill\qedsymbol
\end{propn}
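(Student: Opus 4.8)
The plan is to reduce everything to the Stanley--Reisner dictionary and then play Hochster's formula against combinatorial Alexander duality. Write $J = I_\Delta$ for the Stanley--Reisner ideal of a simplicial complex $\Delta$ on $V$, so that, as recorded above, $J^\star = I_{\Delta^\star}$. Both sides of the asserted equality are extremal statistics of multigraded Betti numbers: $\projdim R/J = \max\{l : \beta_{l,\sigma}(R/J) \neq 0 \text{ for some } \sigma\}$, while $\reg J^\star = \max\{|\tau| - l : \beta_{l,\tau}(J^\star)\neq 0\}$, the latter because each $\naturals$-graded Betti number is the sum of the multigraded ones over multidegrees of the given total degree. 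Hochster's formula~\cite{MiStCCA05} expresses every such Betti number as the dimension of a reduced simplicial homology group, so the whole statement becomes a comparison of when certain reduced homology groups vanish.

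First I would compute the right-hand side. Applying Hochster's formula to $\Bbbk[\Delta^\star]$ and using the ideal-versus-quotient shift $\beta_{l,\tau}(J^\star) = \beta_{l+1,\tau}(R/J^\star)$ rewrites $\beta_{l,\tau}(J^\star)$ in terms of the reduced cohomology of the induced subcomplex $\Delta^\star|_\tau$. The key structural input is the identity $\Delta^\star|_\tau = (\mathrm{link}_\Delta(V\setminus\tau))^\star$, where the dual on the right is formed inside the vertex set $\tau$; feeding this through combinatorial Alexander duality on $\tau$, which trades reduced cohomology in degree $k$ for reduced homology in the complementary degree $|\tau|-k-3$, converts the data attached to $\Delta^\star$ into data attached to the links of $\Delta$. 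The outcome is the link form of Hochster's formula,
\[
\beta_{l,\tau}(J^\star) = \dim_\Bbbk \rhomo_{l-1}\bigl(\mathrm{link}_\Delta(V\setminus\tau);\Bbbk\bigr),
\]
and hence, writing $W = V\setminus\tau$ and $\ell$ for the homological degree, $\reg J^\star = \max\{\,|V| - |W| - \ell - 1 : \rhomo_\ell(\mathrm{link}_\Delta W;\Bbbk)\neq 0\,\}$.

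It remains to match this with the left-hand side, and here I would route $\projdim R/J$ through the same links. By the Auslander--Buchsbaum formula $\projdim R/J = |V| - \depth R/J$, and the graded form of Hochster's formula for local cohomology~\cite{MiStCCA05} identifies the degrees $i$ in which $H^i_{\mathfrak m}(\Bbbk[\Delta])$ is nonzero as exactly those of the form $\ell + |W| + 1$ with $W\in\Delta$ and $\rhomo_\ell(\mathrm{link}_\Delta W;\Bbbk)\neq 0$. Since $\depth R/J$ is the least such $i$, one gets $\projdim R/J = \max\{\,|V| - |W| - \ell - 1 : W\in\Delta,\ \rhomo_\ell(\mathrm{link}_\Delta W;\Bbbk)\neq 0\,\}$, which is visibly the same maximum as above, a nonzero homology group already forcing $W\in\Delta$ so that the ranges agree. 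I expect the main obstacle to be the bookkeeping that produces the link form of the Betti formula for $J^\star$: one must chase the ideal-to-quotient shift, the restriction-to-link identity, and the degree reversal in Alexander duality simultaneously and verify that the indices line up. A more conceptual alternative that sidesteps the local-cohomology computation is to invoke the duality functor on minimal free resolutions, which directly interchanges the resolution of $\Bbbk[\Delta]$ with that of $J^\star$ and collapses the two extremal invariants onto one another; I would keep that in reserve in case the index-chasing becomes unwieldy.
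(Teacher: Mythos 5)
The paper gives no proof of this proposition: it is quoted as a known theorem of Terai, with \cite[Theorem~5.59]{MiStCCA05} as the reference, so there is no internal argument to measure yours against. That said, your proof is correct and is essentially the standard one. The two forms of Hochster's formula you play off each other --- the induced-subcomplex form for $\beta_{l,\sigma}(R/J)$ and the link form $\beta_{l,\tau}(J^\star) = \dim_\Bbbk \rhomo_{l-1}(\mathrm{lk}_\Delta(V\setminus\tau);\Bbbk)$, the latter obtained via combinatorial Alexander duality from the identity $\Delta^\star|_\tau = (\mathrm{lk}_\Delta(V\setminus\tau))^\star$ formed inside the vertex set $\tau$ --- are precisely the two formulas the paper itself records in Section~\ref{sec:edgeIdeals}, and the index chase you defer does close up: with $W = V\setminus\tau$ and $\ell = l-1$ both extremal invariants become $\max\{|V|-|W|-\ell-1 : \rhomo_\ell(\mathrm{lk}_\Delta W;\Bbbk)\neq 0\}$. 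Your treatment of $\projdim R/J$ via Auslander--Buchsbaum and the local-cohomology form of Hochster's formula is sound, and the remark that a nonzero link homology group forces $W\in\Delta$ (a non-face has void link, hence no reduced homology) is exactly what is needed to see that the two maxima range over the same set of pairs $(W,\ell)$. No gap.
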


The relation between simplicial homology and multigraded Betti numbers is
given by Hochster's Formula~\cite[Corollary~5.12 and
Corollary~1.40]{MiStCCA05}. For $\sigma \subseteq V$, we denote by
$\Delta|_\sigma$ the simplicial complex obtained by taking all the faces of
$\Delta$ whose vertices belong to $\sigma$. Note that $\Delta|_\sigma$ is
the Stanley-Reisner complex of the ideal $I \cap \Bbbk[\sigma]$. Similarly,
define the \emph{link}, $\mathrm{lk}_\Delta(\sigma)$, of $\sigma$ in
$\Delta$ to be the simplicial complex $\{F \setminus \sigma: F \in \Delta,
\sigma \subseteq F\}$. Its Stanley-Reisner ideal in $\Bbbk[\bar \sigma]$ is
$(I: \sigma) \cap \Bbbk[\bar \sigma]$. First, the multidegrees $\sigma$
with $\beta_{l,\sigma}(R/I) \neq 0$ are square-free. Secondly, for all
square-free multidegrees $\sigma$, 
\begin{align}
\beta_{l,\sigma} (R/I) & = \dim_\Bbbk \rhomo_{\vert \sigma \vert - l -
1}(\Delta \vert_\sigma; \Bbbk), \quad \text{and}
\label{eqn:hochsterFormulaDirec}\\
\beta_{l,\sigma}(I^\star) & = \dim_\Bbbk
\rhomo_{l-1}(\mathrm{lk}_\Delta(\bar \sigma); \Bbbk). \nonumber
%\label{eqn:hochsterFormulaDual}
\end{align}
Combining these two formulas we see that
\begin{equation}
\label{eqn:alexDualBettiNos}
\beta_{l,\sigma}(I^\star) =
\beta_{|\sigma|-l, \sigma}\left(\frac{R}{(I:\bar \sigma)}\right).
\end{equation}
We add, parenthetically, that links of faces in Cohen-Macaulay complexes
are themselves Cohen-Macaulay.

We now describe how the graded Betti
numbers change under restriction to a subset of the variables and under
taking colons.
\begin{lemma}
\label{thm:varSetRestrictColon}
Let $I \subseteq R = \Bbbk[V]$ be a square-free monomial ideal, $x \in
V$, $l, j \in \naturals$ and $\sigma \subseteq V$ with $|\sigma| = j$.
\begin{enumerate}

\item \label{thm:varSetRestrict} Let $W \subseteq V$ and $J = (I \cap
\Bbbk[W])R$. Then,
\[
\beta_{l,\sigma}(R/J) = 
\begin{cases}
0, & \sigma \nsubseteq W, \\
\beta_{l,\sigma}(R/I), & \sigma \subseteq W.
\end{cases}
\]
In particular, $\beta_{l,j}(R/J) \leq \beta_{l,j}(R/I)$.

\item \label{thm:varColon} If $\beta_{l,\sigma}(R/(I:x)) \neq 0$, then
$\beta_{l,\sigma}(R/I) \neq 0$ or $\beta_{l,\sigma\cup\{x\}}(R/I) \neq
0$.
\end{enumerate}
\end{lemma}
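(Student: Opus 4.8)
The engine for both parts is Hochster's formula~\eqref{eqn:hochsterFormulaDirec}, which converts a claim about a multigraded Betti number in a squarefree degree $\sigma$ into a claim about the reduced homology $\rhomo_{|\sigma|-l-1}(\Delta|_\sigma;\Bbbk)$ of a restricted complex. Since every multigraded Betti number of a squarefree monomial ideal is supported in squarefree degrees, throughout I may assume $\sigma$ is squarefree.

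For part~\ref{thm:varSetRestrict}, let $\Delta$ and $\Delta'$ be the Stanley--Reisner complexes of $I$ and $J$. Because $J=(I\cap\Bbbk[W])R$, the minimal monomial generators of $J$ are precisely those of $I$ whose support lies in $W$, and by Hochster's formula it suffices to compare $\Delta|_\sigma$ with $\Delta'|_\sigma$. If $\sigma\nsubseteq W$, I would pick $y\in\sigma\setminus W$; no generator of $J$ involves $y$, so a subset $F\subseteq\sigma$ contains the support of some generator if and only if $F\cup\{y\}$ does. Hence $\Delta'|_\sigma$ is a cone with apex $y$, its reduced homology vanishes, and $\beta_{l,\sigma}(R/J)=0$. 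If $\sigma\subseteq W$, a monomial supported in $\sigma$ lies in $J$ exactly when it is divisible by a generator of $I$ supported in $\sigma$, which is the same condition as lying in $I$; thus $I\cap\Bbbk[\sigma]=J\cap\Bbbk[\sigma]$, so $\Delta|_\sigma=\Delta'|_\sigma$ and $\beta_{l,\sigma}(R/J)=\beta_{l,\sigma}(R/I)$. Summing over the finitely many squarefree $\sigma$ with $|\sigma|=j$ gives the stated inequality on $\naturals$-graded Betti numbers.

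For part~\ref{thm:varColon}, I may assume $x\notin I$ (otherwise $(I:x)=R$ and the hypothesis is vacuous). Let $e_x$ denote the multidegree of $x$, and use the multigraded short exact sequence
\[
0\longrightarrow (R/(I:x))(-e_x)\xrightarrow{\ \cdot x\ } R/I\longrightarrow R/(I+(x))\longrightarrow 0 .
\]
First I would rule out $x\in\sigma$: the Stanley--Reisner complex of $(I:x)$ restricted to any $\sigma$ containing $x$ is a cone with apex $x$ (adjoining $x$ to a face of the restriction keeps it a face, since the defining condition only depends on $F\cup\{x\}$), so its reduced homology, and hence $\beta_{l,\sigma}(R/(I:x))$, vanishes. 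Therefore $x\notin\sigma$, the degree $\tau\defeq\sigma\cup\{x\}$ is squarefree, and the hypothesis reads $\tor^R_l(\Bbbk,(R/(I:x))(-e_x))_\tau\neq 0$.

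Now I would read the long exact sequence of $\tor^R_\bullet(\Bbbk,-)$ attached to the displayed sequence in multidegree $\tau$. The nonzero class maps into $\tor^R_l(\Bbbk,R/I)_\tau$; if its image is nonzero, then $\beta_{l,\tau}(R/I)\neq 0$ and we are done. Otherwise, by exactness it lies in the image of the connecting map from $\tor^R_{l+1}(\Bbbk,R/(I+(x)))_\tau$, so $\beta_{l+1,\tau}(R/(I+(x)))\neq 0$. To finish I would identify this last number via Hochster's formula: the complex of $I+(x)$ is the deletion $\{F\in\Delta:x\notin F\}$, whose restriction to $\tau$ coincides with $\Delta|_\sigma$ because $x\notin\sigma$; a degree count then yields $\beta_{l+1,\tau}(R/(I+(x)))=\rhomo_{|\sigma|-l-1}(\Delta|_\sigma;\Bbbk)=\beta_{l,\sigma}(R/I)$, so $\beta_{l,\sigma}(R/I)\neq 0$. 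The only delicate point is bookkeeping: aligning the homological index shift in Hochster's formula with the multidegree shift by $e_x$ in the short exact sequence. (One may instead replace the long exact sequence by the Mayer--Vietoris sequence relating $\rhomo_\bullet(\Delta|_\tau)$, $\rhomo_\bullet(\mathrm{lk}_\Delta(x)|_\sigma)$ and $\rhomo_\bullet(\Delta|_\sigma)$, which is its topological shadow.)
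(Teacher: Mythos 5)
Your proof is correct and follows the paper's strategy almost exactly: part~(\ref{thm:varSetRestrict}) via the cone argument and the identification $\Delta|_\sigma=\tilde\Delta|_\sigma$, and part~(\ref{thm:varColon}) via the same short exact sequence $0\to (R/(I:x))(-x)\to R/I\to R/(I,x)\to 0$, the same exclusion of $x$ from $\sigma$, and the same two-case reading of the long exact sequence of $\tor$. The one divergence is the final step of (\ref{thm:varColon}): the paper identifies $\beta_{l+1,\tau}(R/(I,x))$ with $\beta_{l,\sigma}(R/I)$ by passing to $J=(I\cap\Bbbk[V\setminus\{x\}])R$ and running a second long exact sequence for the non-zerodivisor $x$ on $R/J$, whereas you compute it directly from Hochster's formula applied to the deletion complex of $I+(x)$, using $|\tau|-(l+1)-1=|\sigma|-l-1$. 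Your degree count is right, and this shortcut is arguably cleaner since it avoids the auxiliary exact sequence; both arguments ultimately rest on the same fact that killing the variable $x$ shifts the Betti numbers by one in homological degree and by $x$ in multidegree.
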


\begin{proof}
\eqref{thm:varSetRestrict}: The second assertion follows from the first,
which we now prove. Let $\tilde \Delta$ be the Stanley-Reisner complex of
$J$.  Since for all $x \in V \setminus W$, $x$ does not belong to any
minimal prime ideal of $R/J$, we see that every maximal face of $\tilde
\Delta$ is contains $V \setminus W$. Hence if $\sigma \not \subseteq W$,
then for all $x \in \sigma \setminus W$, $\tilde \Delta |_\sigma$ is a cone
with vertex $x$, which, being contractible, has zero reduced homology.
Applying~\eqref{eqn:hochsterFormulaDirec}, we see that
$\beta_{l,\sigma}(R/J) = 0$.

Now let $\sigma \subseteq W$ and $F \subseteq V$. Then $F \in \Delta
\vert_\sigma$ if and only if $I \subseteq (\bar F)R$ and $F \subseteq
\sigma$, which holds if and only if $J \subseteq (\bar F)R$ and $F
\subseteq \sigma$, which, in turn, holds if
and only if $F \in \tilde \Delta \vert_\sigma$.
Apply~\eqref{eqn:hochsterFormulaDirec} again to get \[
\beta_{l,\sigma} (R/J) = 
\rhomo_{\vert \sigma \vert - l - 1}(\tilde \Delta \vert_\sigma; \Bbbk)
= \rhomo_{\vert \sigma \vert - l - 1}(\Delta \vert_\sigma; \Bbbk)
= \beta_{l,\sigma} (R/I).
\]

\eqref{thm:varColon}: We take the multigraded exact sequence of
$R$-modules:
\begin{equation}
\label{seq:fundColSpecialMulti}
\xymatrix{%
0 \ar[r] & \frac{R}{(I:x)}(-x) \ar[r] & \frac{R}I \ar[r] &
\frac{R}{(I,x)} \ar[r] & 0.}
\end{equation}
The corresponding multigraded long exact sequence of $\tor$ is
\[
\xymatrix{%
\cdots \ar[r] & \tor_{l+1}(\Bbbk, \frac{R}{(I,x)}) \ar[r] &
\tor_l(\Bbbk, \frac{R}{(I:x)}(-x)) \ar[r] & 
\tor_l(\Bbbk, \frac{R}I) \ar[r]
& \cdots.}
\]

Let $W = V \setminus \{x\}$ and $J = (I \cap \Bbbk[W])R$. Since
$\beta_{l,\sigma}(R/(I:x)) \neq 0$ and $x$ does not divide any monomial
minimal generator of $(I:x)$, we have, by the same argument as in
\eqref{thm:varSetRestrict}, $\sigma \subseteq W$. Let $\tau = \sigma
\cup \{x\}$. First observe that 
\[
\tor_l\left(\Bbbk, \frac{R}{(I:x)}\right)_\sigma \simeq 
\tor_l\left(\Bbbk, \frac{R}{(I:x)}(-x)\right)_\tau.
\]
Let us assume that $\beta_{l,\tau}(R/I) = 0$, because, if
$\beta_{l,\tau}(R/I) \neq 0$, there is nothing to prove. Then, the above
long exact sequence of $\tor$, restricted to the multidegree $\tau$,
implies that $\tor_{l+1}(\Bbbk, \frac{R}{(I,x)})_\tau \neq 0$. Now, since
$(I,x) = (J,x)$, we see further $\tor_{l+1}(\Bbbk, \frac{R}{(J,x)})_\tau
\neq 0$. 

Since $x$ is a non-zerodivisor on $R/J$, we have a multigraded short
exact sequence
\[
\xymatrix{%
0 \ar[r] & \frac{R}J(-x) \ar[r] & \frac{R}J \ar[r] &
\frac{R}{(J,x)} \ar[r] & 0,}
\]
which gives the following long exact sequence of
$\tor$:
\[
\xymatrix{%
\cdots \ar[r] & 
\tor_{l+1}(\Bbbk, \frac{R}J) \ar[r] &
\tor_{l+1}(\Bbbk, \frac{R}{(J,x)}) \ar[r] &
\tor_l(\Bbbk, \frac{R}J(-x)) \ar[r] & \cdots.}
\]

Since $x$ does not divide any minimal monomial generator of $J$,
$\beta_{l+1, \tau}(R/J) = 0$. Therefore $\tor_l(\Bbbk, \frac{R}J(-x))_\tau
\neq 0$, or, equivalently, $\tor_l(\Bbbk, \frac{R}J)_\sigma \neq 0$.
By~\eqref{thm:varSetRestrict} above, $\beta_{l,\sigma} (R/I) \neq 0$.
\end{proof}

If $\mathfrak p \subseteq R$ is a prime ideal such that $\height \mathfrak
p = c = \height I$ and $I \subseteq \mathfrak p$, then we say that
$\mathfrak p$ is an \emph{unmixed} associated prime ideal of $R/I$. Denote
the set of unmixed associated prime ideals of $R/I$ by $\unm R/I $. Unmixed
prime ideals are necessarily minimal over $I$, so $\unm R/I  \subseteq \ass
R/I$; we say that $I$ or $R/I$ is \emph{unmixed} if $\unm R/I  = \ass R/I$. 

We now restrict our attention to edge ideals of graphs. Every square-free
quadratic monomial ideal can be considered as the edge ideal of some
simple graph. The theory of edge ideals is systematically developed
in~\cite[Chapter~6]{VillmonAlg01}. Hereafter $I$ is the edge ideal of $G$,
which we have set to be a simple graph on $V$. A \emph{vertex cover} of $G$
is a set $A \subseteq V$ such that whenever $xy$ is an edge of $G$, $x \in
A$ or $y \in A$. It is easy to see that for all $A \subseteq V$, $A$ is a
vertex cover of $G$ if and only if the prime ideal $(x : x \in A)$ contains
$I$. Since $I$ is square-free, $R/I$ is reduced; therefore, $\ass R/I$ is
the set of minimal prime ideals containing $I$.  These are monomial ideals,
and, hence, are in bijective correspondence with the set of \emph{minimal}
vertex covers of $G$. We will say that $G$ is \emph{unmixed} (respectively,
\emph{Cohen-Macaulay}) if $R/I$ is unmixed (respectively, Cohen-Macaulay).
Observe that if $G$ is unmixed, then all its minimal vertex covers have the
same size.

If $xy$ is an edge of $G$, then we say that $x$ and $y$ are
\emph{neighbours} of each other. An edge is \emph{incident} on its
vertices. We say that an edge $xy$ is \emph{isolated} if there are no other
edges incident on $x$ or on $y$. 
Let $G$ be a graph. A \emph{matching} in $G$ is a maximal (under inclusion)
set $\mathrm m$ of edges such that for all $x \in V$, at most one edge in
$\mathrm m$ is incident on $x$. Edges in a matching form a regular
sequence on $R$. We say that $G$ has \emph{perfect matching}, or, is
\emph{perfectly matched},
if there is a matching $\mathrm
m$ such that for all $x \in V$, exactly one edge in $\mathrm m$ is incident
on $x$.

\begin{lemma}
\label{thm:GPerfMatchIffCardsOfPartsIsHeight}
Let $G$ be a bipartite graph on the vertex set $V = V_1 \bigsqcup V_2$,
with edge ideal $I$. Then $G$ has a perfect matching if and only if $|V_1|
= |V_2| = \height I$. In particular, unmixed bipartite graphs have perfect
matching.
\end{lemma}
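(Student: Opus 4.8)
The plan is to reduce the biconditional to König's minimax theorem for bipartite graphs, after first rewriting $\height I$ combinatorially. From the discussion preceding the lemma, the minimal primes of $R/I$ are exactly the ideals $(x : x \in A)$ with $A$ a minimal vertex cover of $G$, and such a prime has height $|A|$. Hence $\height I$ equals the minimum size $\tau(G)$ of a vertex cover. König's theorem then identifies $\tau(G)$ with the maximum size $\nu(G)$ of a matching, so throughout I may freely replace $\height I$ by $\nu(G)$. The remaining task is to relate perfect matchings to the equality $\nu(G) = |V_1| = |V_2|$, and here the key structural fact I would use is that every edge of $G$ has exactly one endpoint in each part, so every matching has at most $\min\{|V_1|, |V_2|\}$ edges.

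For the forward implication I would observe that a perfect matching $\mathrm m$ saturates every vertex, hence induces a bijection between $V_1$ and $V_2$; this forces $|V_1| = |V_2| = |\mathrm m|$, and since $|\mathrm m|$ already attains the maximum possible value $|V_1|$, we get $\nu(G) = |V_1| = |V_2|$, i.e.\ $\height I = |V_1| = |V_2|$. For the converse, assuming $|V_1| = |V_2| = \height I = \nu(G)$, I would take a maximum matching $\mathrm m$; it has $|V_1|$ edges with distinct endpoints in $V_1$, so all of $V_1$ is saturated, and symmetrically all of $V_2$, whence $\mathrm m$ is perfect.

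For the final clause I would show that the hypothesis of no isolated vertices makes both $V_1$ and $V_2$ minimal vertex covers: each is a cover since every edge meets both parts, and minimality of $V_1$ holds because any $x \in V_1$ has some neighbour $y \in V_2$ (no isolated vertices), so the edge $xy$ is uncovered by $V_1 \setminus \{x\}$; likewise for $V_2$. Unmixedness then forces $|V_1| = |V_2| = \height I$, and the first part supplies a perfect matching. I expect the single nontrivial ingredient to be König's theorem (equivalently Hall's marriage theorem); all the other steps are elementary counting, so the invocation of that minimax result is really the crux of the argument.
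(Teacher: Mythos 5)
Your proposal is correct and follows essentially the same route as the paper: identify $\height I$ with the minimum vertex cover size via the minimal-prime correspondence, apply K\"onig's theorem in both directions, and for the unmixed case observe that $V_1$ and $V_2$ are both minimal vertex covers. You supply a few more elementary details (the saturation argument and the role of the no-isolated-vertices hypothesis in minimality of $V_1$, $V_2$), but the substance is identical.
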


\begin{proof}
If $G$ has a perfect matching, then $|V_1| = |V_2|$. Moreover, by
K\"onig's theorem~\cite[Theorem~3.1.16]{WestGraphTheory96},
the maximum size of any matching equals the minimum size of any vertex
cover; hence $|V_1| = |V_2| = \height I$. Conversely, if $|V_1| = |V_2| =
\height I$, then, again by K\"onig's theorem, $G$ has a matching of $|V_1|
= |V_2|$ edges, \textit{i.e.}, it has a perfect matching.

If $G$ is unmixed, then every minimal vertex cover of $G$ has the same
size. Observe that both $V_1$ and $V_2$ are minimal vertex covers of $G$.
\end{proof}

\begin{discussion}
\label{disc:defn-of-digraph} 
Let $\mathfrak d$ be any directed graph on $[c]$, and denote the underlying
undirected graph of $\mathfrak d$ by $|\mathfrak d|$.  We will write $j
\succ i$ if there is a directed path from $i$ to $j$ in $\mathfrak d$. By
$j \succcurlyeq i$ (and, equivalently, $i \preccurlyeq j$) we mean that $j
\succ i$ or $j = i$. For $A \subseteq [c]$, we say that $j \succcurlyeq A$
if there exists $i \in A$ such that $j \succcurlyeq i$. We say that a set
$A \subseteq [c]$ is an \emph{antichain} if for all $i, j \in A$, there is
no directed path from $i$ to $j$ in $\mathfrak d$, and, by $\mathcal
A_\mathfrak d$, denote the set of antichains in $\mathfrak d$. We consider
$\varnothing$ as an antichain. A \emph{coclique} of $|\mathfrak d|$ is a
set $A \subseteq [c]$ such that for all $i \neq j \in A$, $i$ and $j$ are
not neighbours in $|\mathfrak d|$. Antichains in $\mathfrak d$ are
cocliques in $|\mathfrak d|$, but the converse is not, in general, true. We
say that $\mathfrak d$ is \emph{acyclic} if there are no directed cycles,
and \emph{transitively closed} if, for all $i, j, k \in [c]$, whenever $ij$
and $jk$ are (directed) edges in $\mathfrak d$, $ik$ is an edge. Observe
that $\mathfrak d$ is a poset under the order $\succcurlyeq$ if and only
if it is acyclic and transitively closed. If $\mathfrak d$ is a poset, we
say that, for $i, j \in [c]$, $j$ \emph{covers} $i$ if $j \succ i$ and
there does not exist $j'$ such that $j \succneqq j' \succneqq i$. Let $G$
be a bipartite graph on $V = V_1 \bigsqcup V_2$ with perfect matching. Let
$V_1 = \{x_1, \cdots, x_c\}$ and $V_2 = \{y_1, \cdots, y_c\}$.  After
relabelling the vertices, we will assume that $x_iy_i$ is an edge for all
$i \in [c]$. We associate $G$ with a directed graph $\mathfrak d_G$ on
$[c]$ defined as follows: for $i \neq j \in [c]$, $ij$ is an edge of
$\mathfrak d_G$ if and only if $x_iy_j$ is an edge of $G$.  (Here, by $ij$,
we mean the directed edge from $i$ to $j$.) Observe that $\mathfrak d_G$ is
simple, \textit{i.e.}, without loops and multiple edges. Let $\kappa(G)$
denote the largest size of any coclique in $|\mathfrak d_G|$.  
\hfill\qedsymbol
\end{discussion}

The significance of $\kappa(G)$ is that it gives a lower bound for $\reg
R/I$. Following Zheng~\cite{Zhereslnfacets04}, we say that two edges $vw$
and $v'w'$ of a graph $G$ are \emph{disconnected} if they are no more edges
between the four vertices $v,v',w,w'$. A set $\mathbf a$ of edges is
pairwise disconnected if and only if $(I \cap \Bbbk[V_\mathbf a])R$ is
generated by the regular sequence of edges in $\mathbf a$, where by
$V_\mathbf a$, we mean the set of vertices on which the edges in $\mathbf
a$ are incident.  The latter condition holds if and only if the subgraph of
$G$ induced on $V_\mathbf a$, denoted as $G|_{V_\mathbf a}$, is a
collection of $|\mathbf a|$ isolated edges. In particular, the edges in any
pairwise disconnected set form a regular sequence in $R$. Set $r(I) \defeq
\max \{ |\mathbf a| : \mathbf a \; \text{is a set of pairwise disconnected
edges in} \; G\}$.
\begin{lemma}
\label{thm:rIIsAtLeastKappaG}
Let $G$ be bipartite graph with perfect matching. Then, with notation as in
Discussion~\ref{disc:defn-of-digraph}, $r(I) \geq \kappa(G) \geq \max \{|A|
: A \in \mathcal A_{\mathfrak{d}_G}\}$.
\end{lemma}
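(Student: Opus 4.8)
The plan is to establish the two inequalities separately. The right-hand inequality $\kappa(G) \geq \max\{|A| : A \in \mathcal A_{\mathfrak d_G}\}$ should be immediate from the observation recorded in Discussion~\ref{disc:defn-of-digraph} that every antichain of $\mathfrak d_G$ is a coclique of $|\mathfrak d_G|$: if $A$ is an antichain, then for distinct $i, j \in A$ there is no directed path — and in particular no directed edge — between $i$ and $j$, so $i$ and $j$ are not neighbours in $|\mathfrak d_G|$, i.e.\ $A$ is a coclique. Taking $A$ to be a maximum antichain then gives the bound.

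For the left-hand inequality $r(I) \geq \kappa(G)$, my idea is to start from a maximum coclique and exhibit a pairwise disconnected set of edges of the same size. So I would let $A \subseteq [c]$ be a coclique of $|\mathfrak d_G|$ with $|A| = \kappa(G)$, and consider the set of matching edges $\mathbf a = \{x_i y_i : i \in A\}$, whose vertex set is $V_\mathbf a = \{x_i, y_i : i \in A\}$. By the characterization recalled just before the statement, it will suffice to show that the induced subgraph $G|_{V_\mathbf a}$ is a disjoint union of the $|A|$ isolated edges $x_i y_i$.

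To verify this I would combine bipartiteness with the coclique hypothesis. Since $G$ is bipartite with parts $V_1, V_2$, every edge of $G|_{V_\mathbf a}$ must run from some $x_i$ to some $y_j$ with $i, j \in A$. When $i = j$ this is the matching edge $x_i y_i$, which is present. When $i \neq j$, the edge $x_i y_j$ is, by the definition of $\mathfrak d_G$, present exactly when $ij$ is a directed edge of $\mathfrak d_G$; but $A$ is a coclique, so no two distinct elements of $A$ are neighbours in $|\mathfrak d_G|$, and hence neither $x_i y_j$ nor $x_j y_i$ can be an edge of $G$. Thus $G|_{V_\mathbf a}$ consists of precisely the edges $x_i y_i$, $i \in A$, and these are isolated within it, so $\mathbf a$ is pairwise disconnected and $r(I) \geq |A| = \kappa(G)$.

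I expect the argument to be essentially bookkeeping once the right family of edges is chosen; the one place requiring genuine care — and the natural place such a construction could fail — is the bipartiteness step. It is bipartiteness that forces every edge among $V_\mathbf a$ to run between an $x$ and a $y$, and thereby lets the coclique condition on $|\mathfrak d_G|$ control \emph{all} the off-diagonal edges. In a non-bipartite graph one could also have edges of the form $x_i x_j$ or $y_i y_j$, which the data of $\mathfrak d_G$ does not record, so the choice of matching edges would no longer guarantee disconnectedness; this is why the hypothesis that $G$ is bipartite is the crucial ingredient.
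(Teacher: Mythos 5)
Your proposal is correct and follows essentially the same route as the paper: the paper also passes from a maximum coclique $A$ of $|\mathfrak d_G|$ to the matching edges $\{x_iy_i : i \in A\}$ (stating without elaboration that these are ``easily seen'' to be pairwise disconnected) and then invokes the observation from Discussion~\ref{disc:defn-of-digraph} that antichains are cocliques. Your spelled-out verification of the pairwise-disconnectedness step --- bipartiteness ruling out $x_ix_j$ and $y_iy_j$, and the coclique condition ruling out $x_iy_j$ and $x_jy_i$ --- is exactly the argument the paper leaves implicit.
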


\begin{proof}
If $A \subseteq [c]$ is a coclique of $|\mathfrak d_G|$, we easily see that
the edges $\{x_iy_i : i \in A\}$ are pairwise disconnected in $G$. The
assertion now follows from the observation, which we made in
Discussion~\ref{disc:defn-of-digraph}, that any antichain in $\mathfrak
d_G$ is a coclique of $|\mathfrak d_G|$.
\end{proof}

The assertion of Theorem~\ref{thm:unmBipRegIsrI} is that when $G$ is 
an unmixed bipartite graph, equality holds in the above lemma and that this
quantity equals $\reg R/I$. We will prove Theorem~\ref{thm:unmBipRegIsrI}
in the next section; now, we relate some properties of bipartite graphs
with their associated directed graphs.

\begin{lemma}
\label{thm:OnlyXorOnlyYonChains}
Let $G$ be bipartite graph with perfect matching, and adopt the notation of
Discussion~\ref{disc:defn-of-digraph}. Let $j \succcurlyeq i$. Then for all
$\mathfrak p \in \unm R/I$, if $y_i \in \mathfrak p$, then $y_j \in
\mathfrak p$.
\end{lemma}

\begin{proof}
Applying induction on the length of a directed path from $i$ to $j$, we may
assume, without loss of generality, that $ij$ is a directed edge of
$\mathfrak d_G$. Let $\mathfrak p \in \unm R/I$ and $k \in [c]$. Since
$x_ky_k \in I$, $x_k \in \mathfrak p$ or $y_k \in \mathfrak p$. Since
$\height \mathfrak p = c$, in fact, $x_k \in \mathfrak p$ if and only if
$y_k \not \in \mathfrak p$. Now since $y_i \in \mathfrak p$, $x_i \not \in
\mathfrak p$, so $(I:x_i) \subseteq \mathfrak p$. Note that since $x_iy_j$
is an edge of $G$, $y_j \in (I:x_i)$.
\end{proof}

\begin{thm}
\label{thm:VillarrealHerzogHibiBipGraphs}
Let $G$ be a bipartite graph on the vertex set $V = V_1 \bigsqcup V_2$. 
\begin{enumerate}
\label{enum:VillarrealHerzogHibiBipGraphs}
\item \label{thm:unmIsPerfMatchedAndTransCl} 
\cite[Theorem~1.1]{VillUnmBip07}
$G$ is unmixed if and only if $G$ has a perfect matching and $\mathfrak
d_G$ is transitively closed.
\item \label{thm:CMbipHeHinew} 
\cite[Lemma~3.3 and Theorem~3.4]{HeHiCMbip05}
$G$ is Cohen-Macaulay if and only if $G$ is perfectly matched and the
associated directed graph $\mathfrak d_G$ is acyclic and transitively
closed, \textit{i.e.}, it is a poset.
\hfill\qedsymbol
\end{enumerate}
\end{thm}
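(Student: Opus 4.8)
The plan is to route both statements entirely through minimal vertex covers and their sizes, invoking Reisner's criterion and vertex decomposability only for the subtle part of the second statement. Since both $V_1$ and $V_2$ are vertex covers of size $c$, Lemma~\ref{thm:GPerfMatchIffCardsOfPartsIsHeight} tells me that an unmixed $G$ automatically has a perfect matching, so in both statements I may fix a perfect matching $x_iy_i$ and work with $\mathfrak d_G$. The encoding I would set up is this: given a minimal vertex cover $A$, each matched edge $x_iy_i$ forces $x_i\in A$ or $y_i\in A$, so $[c]$ partitions into $X=\{i: x_i\in A,\ y_i\notin A\}$, $Y=\{i: y_i\in A,\ x_i\notin A\}$ and $P=\{i: x_i,y_i\in A\}$, whence $|A|=c+|P|$. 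Thus $G$ is unmixed precisely when no minimal vertex cover has $P\neq\varnothing$.

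For part~\eqref{thm:unmIsPerfMatchedAndTransCl} I would first show transitive closure is sufficient. Suppose $\mathfrak d_G$ is transitively closed and some minimal $A$ has $j\in P$. Minimality makes both $x_j$ and $y_j$ irredundant: deleting $x_j$ must uncover some edge $x_jy_k$ (with $k\neq j$, since $y_j\in A$), so $y_k\notin A$ and $jk$ is an edge; deleting $y_j$ must uncover some $x_iy_j$, so $x_i\notin A$ and $ij$ is an edge. Transitive closure then yields the edge $ik$, i.e.\ $x_iy_k\in I$, yet neither $x_i$ nor $y_k$ lies in $A$, contradicting that $A$ is a cover. Hence $P=\varnothing$ always and $G$ is unmixed. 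For the converse I would argue the contrapositive: if transitivity fails, pick $i,j,k$ with edges $ij,jk$ but $ik$ a non-edge, and form $B=\{x_j,y_j,y_i,x_k\}\cup\{x_l,y_l: l\neq i,j,k\}$, which excludes only $x_i$ and $y_k$. Because $x_iy_k$ is a non-edge, $B$ is a genuine vertex cover; any minimal $A\subseteq B$ must retain $y_j$ (the only endpoint of $x_iy_j$ lying in $B$) and $x_j$ (the only endpoint of $x_jy_k$ in $B$), so $j\in P$ and $|A|>c$, exhibiting $G$ as mixed.

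For part~\eqref{thm:CMbipHeHinew}, Cohen--Macaulayness forces unmixedness, so part~\eqref{thm:unmIsPerfMatchedAndTransCl} already supplies a perfect matching and transitive closure; what remains is acyclicity together with the converse. If $\mathfrak d_G$ had a directed cycle, transitive closure would give $i\neq j$ with both $ij$ and $ji$ edges, hence an induced four-cycle on $\{x_i,y_i,x_j,y_j\}$. I would convert this into a violation of Reisner's criterion: using a linear extension of the relations among the remaining indices, I would build a face $\tau$ of the Stanley--Reisner complex $\Delta$ of $I$ whose link is the independence complex of that $C_4$, i.e.\ the two disjoint edges $\{x_i,x_j\}$ and $\{y_i,y_j\}$; a disconnected link of positive dimension cannot occur in a Cohen--Macaulay complex, so $\mathfrak d_G$ is acyclic and hence a poset.

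The genuinely harder implication is that a poset $\mathfrak d_G$ makes $R/I$ Cohen--Macaulay, and here the plan is induction on $c$ via vertex decomposability of $\Delta$ (which is pure of dimension $c-1$ because unmixedness makes every facet a complement of a minimal vertex cover). Choosing a minimal element $m$ of the poset, the vertex $y_m$ has $x_m$ as its unique neighbour, which I expect makes $x_m$ a shedding vertex: the deletion $\mathrm{del}_\Delta(x_m)$, the complex of faces avoiding $x_m$, is a cone with apex $y_m$ and so is contractible, while the link $\mathrm{lk}_\Delta(x_m)$ is the independence complex on $V\setminus(\{x_m\}\cup\{y_j: x_my_j\in I\})$. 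In that link every $x_l$ with $l\succ m$ has lost all its neighbours and becomes an isolated (cone) vertex; after discarding these, the link is exactly the poset bipartite graph on the induced subposet $\{l: l\not\succcurlyeq m\}$, to which the inductive hypothesis applies. The step I expect to be the main obstacle is organizing this into an honest vertex-decomposition and verifying that the resulting shelling is pure, so that shellability upgrades to Cohen--Macaulayness rather than mere sequential Cohen--Macaulayness; this is precisely where acyclicity (not just transitive closure) is used, reflecting the fact that dropping it in part~\eqref{thm:CMbipHeHinew} costs exactly the Cohen--Macaulay property.
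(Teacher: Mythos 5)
The paper offers no proof of this theorem --- it is quoted from Villarreal and from Herzog--Hibi --- so there is no internal argument to measure you against; what matters is whether your self-contained proof works. Your part (a) does: the partition of $[c]$ into $X$, $Y$, $P$ with $|A| = c + |P|$, the use of minimality to produce the edges $ij$ and $jk$ when $j \in P$, and the cover $V \setminus \{x_i, y_k\}$ in the converse are all sound. (One small omission: after producing $ij$ and $jk$ you should dispose of the case $i = k$ separately, since transitive closure then yields nothing; but there $x_i \notin A$ and $y_i \notin A$ already leave the matching edge $x_iy_i$ uncovered.) The reverse implication of part (b) --- induction on $c$ using a minimal element $m$, the cone structure of $\mathrm{del}_\Delta(x_m)$ with apex $y_m$, and the identification of $\mathrm{lk}_\Delta(x_m)$ as an iterated cone over the poset graph on $\{l : l \not\succcurlyeq m\}$ --- is a workable and standard route; purity comes from unmixedness via part (a), so vertex decomposability does upgrade to Cohen--Macaulayness.

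The genuine gap is in the forward implication of part (b). You want a face $\tau$ of $\Delta$ whose link is the independence complex of the induced $C_4$ on $\{x_i, y_i, x_j, y_j\}$, where $ij$ and $ji$ are both edges of $\mathfrak d_G$. Such a face need not exist. If the strong component $Z$ containing $i$ and $j$ has a third element $l$, then transitive closure makes $x_l$ adjacent to $y_i$ and $y_j$, and $y_l$ adjacent to $x_i$ and $x_j$, so neither $x_l$ nor $y_l$ can lie in $\tau$; moreover every neighbour $y_m$ of $x_l$ satisfies $m \succcurlyeq l$, hence $m \succcurlyeq i$, hence $y_m$ is adjacent to $x_i$ and is likewise excluded from $\tau$. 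So $x_l$ can be neither in $\tau$ nor dominated by it, and no face of $\Delta$ has the link you describe. The repair is to target the whole strong component: the induced subgraph on $\{x_l, y_l : l \in Z\}$ is a complete bipartite graph $K_{\zeta,\zeta}$ with $\zeta = |Z| \geq 2$, whose independence complex is two disjoint simplices of dimension $\zeta - 1 \geq 1$. Taking $\tau$ to consist of $x_m$ for every $m \notin Z$ with $m \succ i$ or $m$ incomparable to $i$, and $y_m$ for every $m \notin Z$ with $i \succ m$, one checks (using transitive closure exactly as above) that $\tau$ is independent, meets no neighbour of $\{x_l, y_l : l \in Z\}$, and together with its neighbours covers everything outside $Z$; its link is then disconnected of positive dimension, contradicting Reisner's criterion. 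With that substitution your argument for acyclicity goes through.
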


\begin{discussion}
\label{disc:partnIntoMaxlDirCyc}
Let $\mathfrak d$ be a directed graph. We say that a pair $i, j$ of
vertices $\mathfrak d$ are \emph{strongly connected} if there are directed
paths from $i$ to $j$ and from $j$ to $i$;
see~\cite[Definition~1.4.12]{WestGraphTheory96}.  A \emph{strong component}
of $\mathfrak d$ is an induced subgraph maximal under the property that
every pair of vertices in it is strongly connected.  Strong components of
$\mathfrak d$ form a partition of its vertex set.  Now let $G$ be a 
bipartite graph with perfect matching. Let $\mathcal Z_1, \ldots, \mathcal
Z_t$ be the vertex sets of the strong components of $\mathfrak d_G$. Define
a directed graph $\widehat{\mathfrak d}$ on $[t]$ by setting, for $a \neq b
\in [t]$, $ab$ to be a directed edge (from $a$ to $b$) if there exists a
directed path in $\mathfrak d_G$ from any (equivalently, all, since
$\mathfrak d_G|_{\mathcal Z_a}$ is strongly connected) of the vertices in
$\mathcal Z_a$ to any (equivalently, all, since $\mathfrak d_G|_{\mathcal
Z_b}$ is transitively closed) of the vertices in $\mathcal Z_b$. We observe
that $\widehat{\mathfrak d}$ has no directed cycles. Now assume further
that $G$ is unmixed. Then, since $\mathfrak d_G$ is transitively closed,
$\widehat{\mathfrak d}$ is transitively closed, \textit{i.e.}, it is a
poset under the order induced from $\mathfrak d_G$. We will use the same
notation for the induced order, \textit{i.e.}, say that $b \succ a$ if
there is a directed edge from $a$ to $b$. Define the \emph{acyclic
reduction} of $G$ to be the bipartite graph $\widehat G$ on new vertices
$\{u_1, \ldots, u_t\} \bigsqcup \{v_1, \ldots, v_t\}$, with edges $u_av_a$,
for all $1 \leq a \leq t$ and $u_av_b$, for all directed edges $ab$ of
$\widehat{\mathfrak d}$. Let $S = \Bbbk[u_1, \ldots, u_t, v_1, \ldots,
v_t]$, with standard grading. Let $\widehat I \subseteq S$ be the edge
ideal of $\widehat G$. Let $\zeta_i = |\mathcal Z_i|, 1 \leq i \leq t$. For
a multidegree $\tau = \prod_i u_i^{s_i} \prod v_i^{t_i}$, set
$\tau^\zeta = \prod_i u_i^{s_i\zeta_i} \prod v_i^{t_i\zeta_i}$.
\hfill\qedsymbol
\end{discussion}

\begin{lemma}
\label{thm:assIandAntiCh}
Let $G$ be an unmixed bipartite graph with edge ideal $I$. For an antichain
$A \neq \varnothing$ of $\widehat{\mathfrak d}$, let $\Omega_A = \{ j \in
\mathcal Z_b : b \succcurlyeq A\}$. Let $\Omega_\varnothing = \varnothing$.
Then $\ass R/I = \{(x_i : i \not \in \Omega_A) + (y_i : i \in \Omega_A) : A
\in \mathcal A_{\widehat{\mathfrak d}}\}$.
\end{lemma}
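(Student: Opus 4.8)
The plan is to identify $\ass R/I$ with the set of minimal vertex covers of $G$, show that each such cover is encoded by a filter (up-set) of the preorder $\mathfrak d_G$, and then pass this data through the strong-component quotient $\widehat{\mathfrak d}$ and the classical bijection between filters and antichains of a finite poset.

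First I would reduce to a combinatorial problem. Since $I$ is square-free, $R/I$ is reduced, so $\ass R/I$ is exactly the set of minimal primes over $I$, which are the monomial primes generated by the minimal vertex covers of $G$. Because $G$ is unmixed, each such prime $\mathfrak p$ has height $c$; and as $x_ky_k \in I$ forces $x_k \in \mathfrak p$ or $y_k \in \mathfrak p$ for each $k$, the height count shows exactly one of $x_k, y_k$ lies in $\mathfrak p$ (this is precisely the dichotomy used in the proof of Lemma~\ref{thm:OnlyXorOnlyYonChains}). Hence every $\mathfrak p \in \ass R/I$ has the form $\mathfrak p_\Omega \defeq (x_i : i \notin \Omega) + (y_i : i \in \Omega)$ for the uniquely determined set $\Omega = \{i : y_i \in \mathfrak p\} \subseteq [c]$.

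Next I would pin down which $\Omega$ arise. Each $\mathfrak p_\Omega$ is a monomial prime of height $c = \height I$, so it belongs to $\ass R/I$ exactly when it is a vertex cover, i.e. when it contains $I$. The edges $x_ky_k$ are covered automatically; the edge $x_iy_j$ (equivalently the directed edge $ij$ of $\mathfrak d_G$) is covered iff $i \notin \Omega$ or $j \in \Omega$. Thus $\mathfrak p_\Omega \in \ass R/I$ iff $\Omega$ is closed upward under the directed edges of $\mathfrak d_G$, and since $\mathfrak d_G$ is transitively closed (Theorem~\ref{thm:VillarrealHerzogHibiBipGraphs}), this is the same as $\Omega$ being a filter of the preorder $([c], \succcurlyeq)$; Lemma~\ref{thm:OnlyXorOnlyYonChains} already supplies one direction of this equivalence. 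So $\ass R/I = \{\mathfrak p_\Omega : \Omega \text{ a filter of } \mathfrak d_G\}$.

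Finally I would transfer filters up to $\widehat{\mathfrak d}$. As each strong component $\mathcal Z_b$ consists of mutually reachable vertices, any filter $\Omega$ of $\mathfrak d_G$ contains all or none of $\mathcal Z_b$; hence $\Omega$ is a union of strong components and $\{b : \mathcal Z_b \subseteq \Omega\}$ is a filter of $\widehat{\mathfrak d}$, giving a bijection between filters of $\mathfrak d_G$ and filters of $\widehat{\mathfrak d}$. Invoking the standard correspondence between antichains and filters in a finite poset~\cite[Chapter~3]{StanEC1}, every filter $U$ of $\widehat{\mathfrak d}$ is generated by the antichain $A$ of its minimal elements via $U = \{b : b \succcurlyeq A\}$, with $\varnothing \mapsto \varnothing$; translating back gives $\Omega = \Omega_A$ and hence the asserted description. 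I expect no substantive obstacle here: the antichain--filter bijection is classical, so the real care is bookkeeping with the direction convention $ij \in \mathfrak d_G \Leftrightarrow x_iy_j \in G$ and checking that the vertex-cover condition matches the filter condition and descends correctly through the strong-component quotient.
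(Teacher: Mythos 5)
Your proposal is correct and follows essentially the same route as the paper: both arguments identify an associated prime with the upward-closed set $\Omega=\{i: y_i\in\mathfrak p\}$ (using the height count to get exactly one of $x_k,y_k$ in $\mathfrak p$, and the covering condition for $x_iy_j$ to get upward closure, as in Lemma~\ref{thm:OnlyXorOnlyYonChains}), note that such a set is a union of strong components, and recover the antichain as the set of minimal elements of the corresponding up-set of $\widehat{\mathfrak d}$. Your packaging via the classical filter--antichain bijection is just a slightly more symmetric bookkeeping of the paper's two explicit inclusions.
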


\begin{proof}
Let $\mathfrak p \in \ass R/I$. Let $\mathrm U \defeq \{b : y_j \in
\mathfrak p \ \text{for some}\ j \in \mathcal Z_b\}$. It follows from
Lemma~\ref{thm:OnlyXorOnlyYonChains} that $y_j \in \mathfrak p$ for all $j
\in \bigcup_{b \in \mathrm U} \mathcal Z_b$ and that if $b' \succ b$ for
some $b \in \mathrm U$, then $b' \in \mathrm U$. Now, the minimal elements
of $U$ form an antichain $A$ under $\succ$. Hence $\{j : y_j \in
\mathfrak p\} = \Omega_A$, showing $\ass R/I \subseteq \{(x_i : i \not \in
\Omega_A) + (y_i : i \in \Omega_A) : A \in \mathcal A_{\widehat{\mathfrak
d}}\}$.

Conversely, let $A \in \mathcal A_{\widehat{\mathfrak d}}$ and $\mathfrak p
\defeq (x_i : i \not \in \Omega_A) + (y_i : i \in \Omega_A)$. Since $\height
\mathfrak p = c = \height I$, it suffices to show that $I \subseteq
\mathfrak p$ in order to show that $\mathfrak p \in \ass R/I$. Clearly, for
all $1 \leq i \leq c$, $x_iy_i \in \mathfrak p$. Take $i \neq j$ such that
$x_iy_j \in I$. If $i \not \in \Omega_A$, then there is nothing to be
shown. If $i \in \Omega_A$, then there exist $a, b, b'$ such that $a \in
A$, $b \succ a$, $i \in \mathcal Z_b$ and $j \in \mathcal Z_{b'}$. Since
$ij$ is a
directed edge of $\mathfrak d_G$, $b' \succ b$ in $\widehat{\mathfrak d}$.
Hence $b' \succ a$, and $j \in \Omega_A$, giving $y_j \in \mathfrak p$. This
shows that $I \subseteq \mathfrak p$.
\end{proof}

\section{Regularity and Depth}
\label{sec:regDepthBipGr}

The content of Lemma~\ref{thm:assIandAntiCh} is that there are subsets $W
\subseteq V$ such that for all $\mathfrak p \in \ass R/I$, if $\mathfrak p
\cap W \neq \varnothing$ then $W \subseteq \mathfrak p$. Looking at
$I^\star$, we see that for all minimal generators $g$ of $I^\star$, if any
element of $W$ divides $g$, then all elements of $W$ divide $g$. Label the
minimal monomial generators of $I^\star$ as $g_1, \ldots, g_s, g_{s+1},
\ldots, g_m$ so that every element of $W$ divides $g_1, \ldots, g_s$ and
no element of $W$ divides $g_{s+1}, \ldots, g_m$. Fix $x \in W$. For $i=1,
\ldots, s$, set $h_i \defeq \frac{x^{|W|}}{\prod_{y \in W} y}g_i$ and $\bar
h_i \defeq \frac{x}{\prod_{y \in W} y}g_i$. Let $J = (h_1, \ldots, h_s,
g_{s+1}, \ldots, g_m)$ and $J' = (\bar h_1, \ldots, \bar h_s, g_{s+1},
\ldots, g_m)$. Let $\phi : R \rightarrow R$ be the ring homomorphism that
sends $x \mapsto x^{|W|}$ and $y \mapsto y$, for all $y \neq x \in V$. We
make two observations: first, that $I^\star$ is a polarization of $J$, and,
secondly, that $J = \phi(J')$. Hence the $\naturals$-graded
Betti numbers of $I^\star$ and $J$ are
identical~\cite[Exercise~3.15]{MiStCCA05}. Further, the following lemma
shows that $\beta_{l, \sigma}(R/J) \neq 0$ if and only if $x^{|W|}$ divides
$\sigma$ and $\beta_{l, \frac{\sigma}{x^{|W|-1}}}(R/J) \neq 0$.

\begin{lemma}
Let $B_1 = \Bbbk[x_1, \ldots, x_n]$ and $B_2= \Bbbk[y_1, \ldots, y_n]$. Let
$\xi_1, \ldots, \xi_n$ be positive integers. Set $\deg x_i = 1$ and $\deg
y_i = \xi_n$ for all $1 \leq i \leq n$. Define a ring homomorphism $\phi:
B_2 \rightarrow B_1$ by sending $y_i \mapsto x_i^{\xi_i}$.  Then for any
acyclic complex $\mathbb G_\bullet$ of finitely generated graded
$B_2$-modules (with degree-preserving maps), $\mathbb G_\bullet
\otimes_{B_2} B_1$ is an acyclic complex of finitely generated graded
$B_1$-modules (with degree-preserving maps).
\end{lemma}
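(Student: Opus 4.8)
The plan is to reduce everything to the \emph{flatness} of $B_1$ as a module over $B_2$ via $\phi$; once that is in hand, acyclicity is automatic, since tensoring an exact complex with a flat module preserves exactness at every spot. First I would verify that $\phi$ is a graded ring homomorphism: it sends the generator $y_i$, of degree $\xi_i$, to $x_i^{\xi_i}$, which again has degree $\xi_i$, so $\phi$ is degree-preserving. Moreover $x_1^{\xi_1}, \ldots, x_n^{\xi_n}$ are algebraically independent over $\Bbbk$, so $\phi$ is injective and identifies $B_2$ with the graded subalgebra $A \defeq \Bbbk[x_1^{\xi_1}, \ldots, x_n^{\xi_n}] \subseteq B_1$.

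The crux is the classical fact that $B_1$ is a finitely generated free module over $A$. Concretely, the monomials $x_1^{a_1} \cdots x_n^{a_n}$ with $0 \leq a_i < \xi_i$ form a basis, because every monomial of $B_1$ factors uniquely as one of these times a monomial in the $x_i^{\xi_i}$. Tracking degrees, this exhibits $B_1$ as the finite graded direct sum $\bigoplus_{0 \le a_i < \xi_i} A(-\sum_i a_i)$ of shifted copies of $A \cong B_2$; in particular $B_1$ is a graded-free, hence flat, $B_2$-module.

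With flatness established I would conclude as follows. Since $- \otimes_{B_2} B_1$ is exact, it carries the acyclic complex $\mathbb G_\bullet$ to an acyclic complex $\mathbb G_\bullet \otimes_{B_2} B_1$ of $B_1$-modules. Each term $G_l \otimes_{B_2} B_1$ is finitely generated over $B_1$, being spanned by the images of a finite generating set of $G_l$; indeed $B_1$ is module-finite over $B_2$, so $G_l \otimes_{B_2} B_1$ is already finitely generated over $B_2$. Each differential is the tensor of a degree-preserving $B_2$-linear map with the graded ring $B_1$, and so remains degree-preserving. I expect no genuine homological obstacle here: the only point requiring care is the grading bookkeeping in the free decomposition above, and the whole content of the lemma is the elementary freeness of a polynomial ring over the subring generated by pure powers of its variables.
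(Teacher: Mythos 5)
Your proposal is correct and follows essentially the same route as the paper: both rest on the observation that $B_1$ is a free, hence flat, $B_2$-module (you simply make the free basis of monomials $x_1^{a_1}\cdots x_n^{a_n}$ with $0 \leq a_i < \xi_i$ explicit, which the paper leaves as classical) together with the fact that $\phi$ preserves degrees. Nothing further is needed.
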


\begin{proof}
Acyclicity of $\mathbb G_\bullet \otimes_{B_2} B_1$ follows from the fact
that $B_1$ is a free and hence flat $B_2$-algebra. The maps in 
$\mathbb G_\bullet \otimes_{B_2} B_1$ are degree-preserving since $\phi$
preserves degrees.
\end{proof}

\begin{propn}
\label{thm:bipPDimWeighedDepth}
Let $G$ be an unmixed bipartite graph, with edge ideal $I$ and acyclic
reduction $\widehat G$. Let $\widehat I \subseteq S$ be the edge ideal of
$\widehat G$. Then $\reg R/I = \projdim \left(\widehat I\right)^\star$ and
$\projdim R/I = \max \{|\tau^\zeta| - l : \beta_{l,\tau}
\left(\left(\widehat I\right)^\star\right) \neq 0 \}$.
\end{propn}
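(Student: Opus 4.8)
The plan is to reduce both equalities to a single statement comparing the multigraded Betti numbers of $I^\star$ with those of $(\widehat I)^\star$, and then convert the resulting numerical data into regularity and projective dimension via Terai's theorem (Proposition~\ref{thm:TeraiPdimReg}). Concretely, I would prove the correspondence
\[
\beta_{l,\sigma}(I^\star)\neq 0 \iff \sigma = \tau^\zeta \ \text{for some}\ \tau\ \text{with}\ \beta_{l,\tau}\bigl((\widehat I)^\star\bigr)\neq 0,
\]
and that in this case the two Betti numbers are equal. Granting this, both formulas follow quickly. Applying Proposition~\ref{thm:TeraiPdimReg} to $J=I$ gives $\projdim R/I = \reg I^\star = \max\{\,|\sigma|-l : \beta_{l,\sigma}(I^\star)\neq 0\,\}$, which equals $\max\{\,|\tau^\zeta|-l : \beta_{l,\tau}((\widehat I)^\star)\neq 0\,\}$ because $|\sigma|=|\tau^\zeta|$; this is the second formula. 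Applying the same proposition to $J=I^\star$ (using $(I^\star)^\star=I$) gives $\projdim R/I^\star = \reg I$, whence $\projdim I^\star = \reg R/I$ after accounting for the shifts $\projdim R/I^\star=\projdim I^\star+1$ and $\reg I = \reg R/I + 1$. Finally, the correspondence shows that the set of homological degrees $l$ carrying a nonzero Betti number is the same for $I^\star$ and $(\widehat I)^\star$, so $\projdim I^\star = \projdim (\widehat I)^\star$, giving the first formula $\reg R/I = \projdim (\widehat I)^\star$.

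For the correspondence itself, I would exploit exactly the polarization/substitution machinery set up in the discussion preceding the lemma on the map $\phi$. By Lemma~\ref{thm:assIandAntiCh} every minimal generator of $I^\star$ (equivalently, every minimal vertex cover of $G$) is constant on strong components: within each $\mathcal Z_b$ it contains either all of $\{x_i : i\in\mathcal Z_b\}$ or none, and likewise for $\{y_i : i\in\mathcal Z_b\}$. Thus each of the $2t$ sets $W_b^x=\{x_i:i\in\mathcal Z_b\}$ and $W_b^y=\{y_i:i\in\mathcal Z_b\}$ is an admissible ``all-or-nothing'' set $W$ in the sense of that discussion. I would collapse these blocks one at a time: each collapse replaces a block $W$ of size $\zeta_b$ by a single variable, passing through the power-substituted ideal $J$, of which the current ideal is a polarization (so the $\naturals$-graded Betti numbers agree), and then through $\phi$, a flat base change to which the lemma on $\phi$ applies, preserving homological degree and rescaling the block's multidegree by $\zeta_b$. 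Because the blocks are pairwise disjoint, the all-or-nothing property persists after each collapse, so the iteration is legitimate; after all $2t$ collapses the generators become $\prod_{b\not\succcurlyeq A}u_b\,\prod_{b\succcurlyeq A}v_b$ over antichains $A$ of $\widehat{\mathfrak d}$, which are precisely the minimal generators of $(\widehat I)^\star$ by Lemma~\ref{thm:assIandAntiCh} applied to the (Cohen--Macaulay) graph $\widehat G$. Tracking multidegrees through the rescalings then yields $\sigma=\tau^\zeta$ together with the equality of Betti numbers.

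The main obstacle is the bookkeeping in this iteration: one must check that after collapsing some blocks the remaining blocks are still admissible, and that every nonzero Betti multidegree of the intermediate ideals is genuinely of block form, so that no Betti number is lost or created. For the latter I would note that the multidegrees supporting nonzero Betti numbers of a squarefree monomial ideal lie in its lcm-lattice and are themselves squarefree; since every generator of $I^\star$ is a union of full blocks, so is every lcm, which forces each such $\sigma$ to have the shape $\tau^\zeta$ and makes the collapse a bijection on supports. Once this is in place, the degree-rescaling statement of the lemma on $\phi$ supplies exactly the weight $\zeta_b$ on each collapsed variable, and assembling the $2t$ steps produces the displayed correspondence, completing the proof.
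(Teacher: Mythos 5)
Your proposal is correct and follows essentially the same route as the paper: reduce via Terai's theorem (Proposition~\ref{thm:TeraiPdimReg}), use Lemma~\ref{thm:assIandAntiCh} to see that the generators of $I^\star$ are constant on the blocks $\{x_i : i \in \mathcal Z_b\}$ and $\{y_i : i \in \mathcal Z_b\}$, and collapse those blocks by combining polarization with the degree-rescaling substitution lemma to identify the Betti data of $I^\star$ with that of $(\widehat I)^\star$ up to the weights $\zeta_b$. The only cosmetic difference is that the paper performs all $2t$ collapses in one step (writing down the weighted ideal $J$ in $\Bbbk[x_{i_1},\ldots,y_{i_t}]$ directly), whereas you iterate block by block and justify the multidegree bookkeeping with an lcm-lattice observation --- a detail the paper leaves implicit.
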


\begin{proof}
By Proposition~\ref{thm:TeraiPdimReg}, $\reg R/I = \projdim I^\star$ and
$\projdim R/I = \reg I^\star$. Hence it suffices to show that $\projdim
I^\star = \projdim \left(\widehat I\right)^\star$ and $\reg I^\star = \max
\{|\sigma^\zeta| - l : \beta_{l,\sigma} \left(\left(\widehat
I\right)^\star\right) \neq 0 \}$.  From Lemma~\ref{thm:assIandAntiCh}, with
the notation used there, it follows that 
\[
I^\star = \left(\prod_{i \not \in \Omega_A} x_i \cdot \prod_{i \in
\Omega_A} y_i : A \in \mathcal A_{\widehat{\mathfrak d}} \right)
= \left(\prod_{\substack{b \not \succcurlyeq A \\ i \in \mathcal Z_b}} x_i
\cdot 
\prod_{\substack{b \succcurlyeq A \\ i \in \mathcal Z_b}} y_i
: \varnothing \neq A \in \mathcal A_{\widehat{\mathfrak d}}
\right) + \left(\prod_{i=1}^c x_i\right).
\]
For each $a \in [t]$, fix $i_a \in \mathcal Z_a$. Now, as the $\mathcal
Z_a$ form a partition of $[c]$, we see that $I^\star$ is a polarization of
the ideal 
\begin{align*}
J = \left(\prod_{b \not \succcurlyeq A} x_{i_b}^{\zeta_b}
\cdot \prod_{b \succcurlyeq A} y_{i_b}^{\zeta_b}
: \varnothing \neq A \in \mathcal A_{\widehat{\mathfrak d}} \right)
+ \left(\prod_{b=1}^t x_{i_b}^{\zeta_b} \right) \\
\subseteq S \defeq \Bbbk[x_{i_1}, \ldots, x_{i_t}, y_{i_1}, \ldots,
y_{i_t}]
\end{align*}
Notice that $S' \simeq S$ (which, we recall, is the
polynomial ring on the vertex set of the acyclic reduction $\widehat G$)
under the map $\phi : x_{i_a} \mapsto u_a$ and $\phi:y_{i_a} \mapsto v_a$,
and that $\phi(\sqrt J) = \left(\widehat I\right)^\star$. Therefore
$\beta_{l, \sigma}(\sqrt J) = \beta_{l,\phi(\sigma)} \left(\left(\widehat
I\right)^\star\right)$. It now suffices to show that $\projdim J = 
\projdim \sqrt J$ and that $\reg J = \max \{|\tau^\zeta| - l :
\beta_{l,\tau}(\sqrt J) \neq 0 \}$. This, being the same argument as in the
opening paragraph of this section, follows from the preceding lemma.
\end{proof}

\begin{remark}
\label{rmk:regOfUnmGrAndAcycRedn}
Let $G$ be an unmixed graph with acyclic reduction $\widehat G$. If $I
\subseteq R$ and $\widehat I \subseteq S$ are the respective edge ideals,
then it follows from Proposition~\ref{thm:bipPDimWeighedDepth} that $\reg
R/I = \projdim \left(\widehat I\right)^\star = \reg S/\widehat I$.
\end{remark}

\begin{lemma}
\label{thm:maxSizeOfAntiChAcycRedn}
Let $G$ be an unmixed bipartite graph with acyclic reduction $\widehat G$.
Then $\max\{|A| : A \in \mathcal A_{\mathfrak{d}_G}\} = \max\{|A| : A \in
\mathcal A_{\mathfrak{d}_{\widehat G}}\}$. 
\end{lemma}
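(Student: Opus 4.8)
The plan is to first pin down the directed graph attached to the acyclic reduction. By the construction in Discussion~\ref{disc:partnIntoMaxlDirCyc}, $\widehat G$ is a bipartite graph on $\{u_1, \ldots, u_t\} \sqcup \{v_1, \ldots, v_t\}$ with perfect matching $\{u_a v_a\}$, and, for $a \neq b$, the pair $u_a v_b$ is an edge of $\widehat G$ precisely when $ab$ is a directed edge of $\widehat{\mathfrak d}$. Comparing this with the recipe of Discussion~\ref{disc:defn-of-digraph} for passing from a perfectly matched bipartite graph to its associated directed graph, I would observe that $\mathfrak{d}_{\widehat G} = \widehat{\mathfrak d}$. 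Hence it suffices to show $\max\{|A| : A \in \mathcal A_{\mathfrak d_G}\} = \max\{|A| : A \in \mathcal A_{\widehat{\mathfrak d}}\}$, \textit{i.e.}, that $\mathfrak d_G$ and its condensation $\widehat{\mathfrak d}$ have the same maximum antichain size, which I would establish by two inequalities.

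For the inequality $\leq$, I would start from the basic fact that any two distinct vertices $i, j$ lying in a common strong component $\mathcal Z_a$ are joined by a directed path (indeed in either direction), so they cannot both belong to an antichain of $\mathfrak d_G$. Thus a maximum antichain $A \subseteq [c]$ meets each $\mathcal Z_a$ in at most one vertex, and the set $B = \{a \in [t] : A \cap \mathcal Z_a \neq \varnothing\}$ satisfies $|B| = |A|$. To check $B \in \mathcal A_{\widehat{\mathfrak d}}$, I would suppose $b' \succ b$ in $\widehat{\mathfrak d}$ for distinct $b, b' \in B$; choosing $i \in A \cap \mathcal Z_b$ and $j \in A \cap \mathcal Z_{b'}$ and splicing the path realizing $b' \succ b$ together with paths inside $\mathcal Z_b$ and $\mathcal Z_{b'}$ (available by strong connectivity), I would produce a directed path from $i$ to $j$ in $\mathfrak d_G$, contradicting $A \in \mathcal A_{\mathfrak d_G}$.

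For the reverse inequality, I would argue in the opposite direction: given an antichain $B \subseteq [t]$ of $\widehat{\mathfrak d}$, I would pick one representative $i_b \in \mathcal Z_b$ for each $b \in B$ and set $A = \{i_b : b \in B\}$, so that $|A| = |B|$. If $A$ were not an antichain of $\mathfrak d_G$, there would be a directed path from $i_b$ to $i_{b'}$ for distinct $b, b' \in B$; projecting this path to the condensation (contracting its sub-paths inside each strong component) yields a directed path from $b$ to $b'$ in $\widehat{\mathfrak d}$, \textit{i.e.}, $b' \succ b$, contradicting $B \in \mathcal A_{\widehat{\mathfrak d}}$. Combining the two inequalities gives the asserted equality.

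The individual steps are routine, and the part demanding the most care is the bookkeeping ensuring that both the path-splicing and the path-projection genuinely respect the antichain condition. The crux is tracking when two indices land in the same versus in different strong components: the same-component case is exactly what strong connectivity rules out of any antichain of $\mathfrak d_G$, while the different-component case is what transfers faithfully between $\mathfrak d_G$ and $\widehat{\mathfrak d} = \mathfrak{d}_{\widehat G}$. Everything else is a direct translation across this correspondence.
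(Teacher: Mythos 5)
Your proposal is correct and follows essentially the same route as the paper: both directions are handled by passing an antichain of $\mathfrak d_G$ down to the strong components (noting that two vertices in the same component cannot both lie in an antichain) and, conversely, lifting an antichain of $\widehat{\mathfrak d}$ by choosing one representative per component. Your path-splicing/path-projection bookkeeping just makes explicit what the paper attributes to transitive closure of $\mathfrak d_G$, and your identification $\mathfrak d_{\widehat G} = \widehat{\mathfrak d}$ is implicit in the paper's construction.
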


\begin{proof}
Let $A = \{i_1, \ldots, i_r\} \subseteq [c]$ be an antichain in $\mathfrak
d_G$. Choose $a_1, \ldots, a_r \in [t]$ such that $i_j \in \mathcal
Z_{a_j}$. Since $\mathfrak d_G$ is transitively closed, it follows that
$\{a_1, \ldots, a_r\}$ is an antichain in $\mathfrak d_{\widehat G}$.
Conversely, if $\{a_1, \ldots, a_r\}$ is an antichain in $\mathfrak
d_{\widehat G}$, then for any choice of $i_j \in \mathcal Z_{a_j}$, $\{i_1,
\ldots, i_r\}$ is an antichain in $\mathfrak d_G$.
\end{proof}

We now prove Theorem~\ref{thm:unmBipRegIsrI}. If $G$ is a tree --- trees are
bipartite --- then $\reg R/I$ is the maximum size of a pairwise
disconnected set of edges in $G$, without the assumption that $G$ is
unmixed~\cite[Theorem~2.18]{Zhereslnfacets04}. However, for bipartite
graphs $G$ that are not trees, we need to assume that $G$ is unmixed. For
example, if $G$ is the cycle on eight vertices, we can choose at most two
edges that are pairwise disconnected, while $\reg R/I = 3$.
{\def\thethm{\ref{thm:unmBipRegIsrI}}
\begin{thm}
\unmBipRegIsrIThm
\end{thm}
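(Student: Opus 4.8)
The plan is to route the whole computation through the explicit Alexander-dual ideal produced in Proposition~\ref{thm:bipPDimWeighedDepth} and reduce the theorem to a purely order-theoretic projective-dimension calculation. By that proposition, $\reg R/I = \projdim(\widehat I)^\star$, and by Lemma~\ref{thm:maxSizeOfAntiChAcycRedn} the number $\max\{|A| : A \in \mathcal A_{\mathfrak d_G}\}$ equals $\max\{|A| : A \in \mathcal A_{\widehat{\mathfrak d}}\}$, i.e.\ the width $w$ (maximum antichain size) of the poset $\widehat{\mathfrak d}$. Reading off the generators exhibited in the proof of Proposition~\ref{thm:bipPDimWeighedDepth}, $(\widehat I)^\star$ is generated, in the variables $u_b,v_b$ indexed by the elements of $\widehat{\mathfrak d}$, by the monomials $m_\Phi = \prod_{b\notin\Phi} u_b\prod_{b\in\Phi} v_b$, one for each filter (up-set) $\Phi = \Omega_A$ of $\widehat{\mathfrak d}$. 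Thus the theorem reduces to showing $\projdim(\widehat I)^\star = w$.

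To establish that, I would first show that $(\widehat I)^\star$ has \emph{linear quotients}: order its generators by any linear refinement of decreasing cardinality of $\Phi$, and check that for each $j$ the colon ideal $(m_{\Phi_1},\ldots,m_{\Phi_{j-1}}):m_{\Phi_j}$ is generated by variables. The identity $\mathrm{lcm}(m_\Phi,m_{\Phi'})/m_\Phi = \prod_{b\in\Phi\setminus\Phi'} u_b \prod_{b\in\Phi'\setminus\Phi} v_b$ shows that an earlier generator contributes a single variable exactly when $\Phi'$ is an \emph{upper cover} of $\Phi$ in the lattice of filters, i.e.\ $\Phi' = \Phi\cup\{b\}$ with $b$ maximal in the subposet induced on the complement $\widehat{\mathfrak d}\setminus\Phi$; and taking $b$ to be a maximal element of $\Phi'\setminus\Phi$ shows that every earlier generator's colon monomial is divisible by one such cover variable $v_b$. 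Hence the linear-quotient set of $m_\Phi$ is $\{v_b : b \text{ maximal in } \widehat{\mathfrak d}\setminus\Phi\}$.

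With linear quotients in hand, the standard formula gives $\projdim(\widehat I)^\star = \max_\Phi |\{v_b : b \text{ maximal in } \widehat{\mathfrak d}\setminus\Phi\}|$, so it remains to identify this maximum with $w$. The maximal elements of $\widehat{\mathfrak d}\setminus\Phi$ always form an antichain, so each set has size at most $w$; conversely, fixing a maximum antichain $A$ and taking $\Phi$ to be the set of elements lying strictly above $A$ makes every element of $A$ maximal in $\widehat{\mathfrak d}\setminus\Phi$, so that set has size exactly $|A| = w$. This yields $\projdim(\widehat I)^\star = w$, and therefore $\reg R/I = w = \max\{|A| : A \in \mathcal A_{\mathfrak d_G}\}$.

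For the \emph{in particular} clause I would combine this with Lemma~\ref{thm:rIIsAtLeastKappaG} and the lower bound $r(I)\le \reg R/I$ recorded in the introduction (from~\cite[Lemma~2.2]{KatzmanCharIndep06}): the chain $\max\{|A|\}\le \kappa(G)\le r(I)\le \reg R/I = \max\{|A|\}$ forces all four quantities to coincide, so $\reg R/I = r(I)$ is the maximum size of a pairwise disconnected set of edges. The main obstacle is the middle step: verifying the linear-quotient property of the Hibi-type ideal $(\widehat I)^\star$ and pinning each quotient set to the antichain of maximal elements of the complementary subposet. Once that combinatorics is in place, both the exact value of the projective dimension and the closing equalities follow formally.
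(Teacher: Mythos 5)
Your proposal is correct, and its skeleton coincides with the paper's: the ``in particular'' clause is obtained from the same sandwich $\max\{|A|\}\le\kappa(G)\le r(I)\le\reg R/I$, and the main equality is reduced, via Proposition~\ref{thm:bipPDimWeighedDepth} and Lemma~\ref{thm:maxSizeOfAntiChAcycRedn}, to computing $\projdim\bigl(\widehat I\bigr)^\star$ for the Cohen--Macaulay acyclic reduction. The genuine difference lies in how that projective dimension is computed. The paper simply quotes the Herzog--Hibi resolution of the dual ideal of a poset (\cite[Corollary~2.2]{HeHiCMbip05}) together with Proposition~\ref{thm:TeraiPdimReg}, whereas you reprove the needed input from scratch: you read off from Lemma~\ref{thm:assIandAntiCh} that the generators of $\bigl(\widehat I\bigr)^\star$ are the monomials $m_\Phi$ indexed by the filters $\Phi$ of $\widehat{\mathfrak d}$, order them by decreasing $|\Phi|$, and verify linear quotients with quotient set $\{v_b : b\ \text{maximal in}\ \widehat{\mathfrak d}\setminus\Phi\}$. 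Your verification is sound --- a maximal element $b$ of $\Phi'\setminus\Phi$ is automatically maximal in the complement of $\Phi$ because $\Phi'$ is a filter, and each such $\Phi\cup\{b\}$ is itself a filter of strictly larger cardinality, hence an earlier generator --- as is the identification of the maximal quotient-set size with the width, using the filter of elements strictly above a maximum antichain. Two small points are worth recording explicitly: all generators of $\bigl(\widehat I\bigr)^\star$ have the same degree $t$, so the Herzog--Takayama mapping-cone resolution is minimal and the formula $\projdim \bigl(\widehat I\bigr)^\star=\max_\Phi|\mathrm{set}(m_\Phi)|$ legitimately applies; and the bijection between filters and antichains (via minimal elements) is what guarantees that every upper cover of $\Phi$ in the lattice of filters occurs among the generators. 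Your route is longer but self-contained, trading the external citation for an elementary combinatorial computation that in effect reproves~\cite[Corollary~2.2]{HeHiCMbip05}.
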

\addtocounter{thm}{-1}}

\begin{proof}
Since $\reg R/I \geq r(I)$ (see the paragraph on
page~\pageref{thm:unmBipRegIsrI} following the statement of 
Theorem~\ref{thm:unmBipRegIsrI}), 
the latter statement follows from the first statement along with
Lemma~\ref{thm:rIIsAtLeastKappaG}. In order to prove the first statement,
let $\widehat G$ be the acyclic reduction of $G$ on the vertex set $\{u_1,
\ldots, u_t\} \bigsqcup \{v_1, \ldots, v_t\}$. Recall that $\widehat G$ is
a Cohen-Macaulay bipartite graph. As in
Discussion~\ref{disc:partnIntoMaxlDirCyc}, let $S = \Bbbk[u_1, \ldots, u_t,
v_1, \ldots, v_t]$. Let $\widehat I \subseteq S$ to be the edge ideal of
$\widehat G$. Remark~\ref{rmk:regOfUnmGrAndAcycRedn} and
Lemma~\ref{thm:maxSizeOfAntiChAcycRedn} give that it suffices to prove the
theorem for Cohen-Macaulay bipartite graphs. If $G$ is Cohen-Macaulay,
then $\mathfrak d_G$ is a poset.
From~\cite[Corollary~2.2]{HeHiCMbip05}, taken along with
Proposition~\ref{thm:TeraiPdimReg}, we see that $\projdim R/I = \max\{|A| :
A \in \mathcal A_{\mathfrak{d}_G}\}$. (Note that $I^\star$ is the ideal
$H_{\mathfrak d_G}$, in the notation of~\cite{HeHiCMbip05}, with 
the $x_i$ and the $y_j$ interchanged.)
\end{proof}

\begin{remark}
\label{rmk:htIsRegIffCI}
Let $G$ be a Cohen Macaulay bipartite graph with edge ideal $I$, with
$\height I = c$. Then $\reg R/I \leq c$. If $\reg R/I = c$, then $R/I$ is a
complete intersection, or, equivalently, $G$ consists of $c$ isolated
edges. We see this as below: Let $\mathfrak d_G$ be the associated directed
graph on $[c]$. Since $\reg R/I$ is the maximum size of an antichain in
$\mathfrak d_G$, $\reg R/I \leq c$. If $\reg R/I = c$, we see
that $\mathfrak d_G$ has an antichain of $c$ elements, which implies that
for all $i \neq j \in [c]$, $i \not \succcurlyeq j$ or $j \not \succcurlyeq
i$, \textit{i.e.}, $x_iy_j$ is not an edge of $G$.
\end{remark}

We would now like to give a description of $\depth R/I$ for an unmixed
bipartite edge ideal $I$ in terms of the associated directed graph. First,
we determine the multidegrees with non-zero Betti numbers for its Alexander
dual. Let $G$ be a Cohen-Macaulay bipartite graph. For antichains $B
\subseteq A$ of $\mathfrak d_G$, $A \neq \varnothing$, set $\sigma_{A,B}
\defeq \prod_{i \not \succcurlyeq A}x_i \prod_{i \succcurlyeq A} y_i
\prod_{i \in B} x_i$.  Set $\sigma_{\varnothing, \varnothing} =
\prod_{i=1}^c x_i$. With this notation, we restate 
\cite[Theorem~2.1]{HeHiCMbip05} as follows:
\begin{thm}
\label{thm:multBettiNumForDual}
Let $G$ be a Cohen-Macaulay bipartite graph with edge ideal $I$. For all $l
\geq 0$, and multidegrees $\sigma$, if $\beta_{l,\sigma}(I^\star) \neq 0$,
then $\beta_{l,\sigma}(I^\star) = 1$ and $\sigma = \sigma_{A,B}$ for some
antichains $B \subseteq A$ of $\mathfrak d_G$ with $|B| = l$.
\end{thm}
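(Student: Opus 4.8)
The plan is to reduce the statement to \cite[Theorem~2.1]{HeHiCMbip05} by making explicit the identification of $I^\star$ with the ideal $H_{\mathfrak d_G}$ attached to the poset $\mathfrak d_G$, under the interchange of the two sets of variables flagged in the parenthetical note following the proof of Theorem~\ref{thm:unmBipRegIsrI}. First I would record that, because $G$ is Cohen--Macaulay, Theorem~\ref{thm:VillarrealHerzogHibiBipGraphs}\eqref{thm:CMbipHeHinew} makes $\mathfrak d_G$ a poset under $\succcurlyeq$; in particular $\mathfrak d_G$ is acyclic, so every strong component is a single vertex and $\widehat{\mathfrak d} = \mathfrak d_G$ with all $\zeta_b = 1$. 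Specializing Lemma~\ref{thm:assIandAntiCh} to this situation gives $\Omega_A = \{j : j \succcurlyeq A\}$ for an antichain $A$, and hence, reading off the complements of the associated primes exactly as in Proposition~\ref{thm:bipPDimWeighedDepth}, the minimal monomial generators of $I^\star$ are precisely the monomials $\prod_{i \not\succcurlyeq A} x_i \prod_{i \succcurlyeq A} y_i$ as $A$ ranges over $\mathcal A_{\mathfrak d_G}$ (the empty antichain giving $\prod_{i=1}^c x_i$).

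Next I would set up the combinatorial dictionary. For each antichain $A$ the set $\Omega_A$ is the filter (up-set) generated by $A$, and $A \mapsto \Omega_A$ is a bijection from $\mathcal A_{\mathfrak d_G}$ onto the filters of $\mathfrak d_G$; complementing gives the order ideals. Under this correspondence the generator attached to $A$ is exactly the generator of $H_{\mathfrak d_G}$ attached to the order ideal $[c] \setminus \Omega_A$, once the roles of the $x_i$ and the $y_i$ are exchanged. Thus $I^\star$ and $H_{\mathfrak d_G}$ coincide after this relabelling of variables, and their multigraded Betti numbers agree under the induced identification of multidegrees, so it is legitimate to transport the computation of \cite[Theorem~2.1]{HeHiCMbip05}.

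It then remains to match the indexing of the nonzero Betti numbers against the monomials $\sigma_{A,B}$. A nonzero Betti number in homological degree $l$ corresponds in the Herzog--Hibi description to a pair of antichains $B \subseteq A$ with $|B| = l$, and multiplying the generator for $A$ by $\prod_{i \in B} x_i$ produces exactly $\sigma_{A,B}$; since every element of $B$ lies in $\Omega_A$, this monomial carries both $x_i$ and $y_i$ for $i \in B$, in agreement with the definition of $\sigma_{A,B}$. As a consistency check, each generator has total degree $c$ while $|\sigma_{A,B}| = c + l$, so the resolution is linear, matching the Eagon--Reiner criterion applied to the Cohen--Macaulay ideal $I$; this confirms that the homological degree is forced to equal $|B|$.

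The main obstacle is purely one of bookkeeping: verifying that the homological grading and the multigrading in the Herzog--Hibi statement line up with the present conventions after the $x \leftrightarrow y$ interchange---in particular that their parametrization of the nonvanishing Betti numbers really is by pairs $B \subseteq A$ of antichains with $l = |B|$, and that no multidegree arises from two distinct such pairs (so that the asserted value $1$, rather than some larger multiplicity, is correct). Once the bijection between filters, order ideals, and antichains is fixed, each of these checks is direct, and the theorem follows.
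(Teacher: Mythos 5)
Your proposal is correct and follows essentially the same route as the paper: the paper also treats this theorem as a restatement of Herzog--Hibi's Theorem~2.1, reading the multidegrees off their explicit description of the differentials after interchanging the roles of the $x_i$ and the $y_j$, exactly the dictionary you set up via Lemma~\ref{thm:assIandAntiCh} and the antichain/filter correspondence. Your Eagon--Reiner linearity check confirming $l=|B|$ is a pleasant addition not present in the paper, but the substance of the argument is the same.
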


(Although the multidegrees in which the Betti numbers are non-zero are not
explicitly given in the statement of \cite[Theorem~2.1]{HeHiCMbip05}, we
can determine then easily from the description of the differentials given
there, prior to stating the theorem. Note, again, that the roles of the
$x_i$ and the $y_j$ are the opposite of what we follow.)

\begin{cor}
\label{thm:depthBdInTheZeta}
Let $G$ be an unmixed bipartite graph with edge ideal $I$. Let $c = \height
I$. Let $t, \zeta_1, \ldots, \zeta_t, \widehat{\mathfrak d}$ be as in
Discussion~\ref{disc:partnIntoMaxlDirCyc}. Then 
\[
\depth R/I = c - \max \left\{\sum_{i \in B} \zeta_i - |B| :
B \ \text{is an antichain of} \ \widehat{\mathfrak d} \right\}.
\]
\end{cor}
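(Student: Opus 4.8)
The plan is to combine the Auslander--Buchsbaum formula with the projective-dimension computation of Proposition~\ref{thm:bipPDimWeighedDepth}. Since $G$ is an unmixed bipartite graph, Lemma~\ref{thm:GPerfMatchIffCardsOfPartsIsHeight} gives $|V_1| = |V_2| = c$, so $R = \Bbbk[V]$ is a polynomial ring of Krull dimension $|V| = 2c$. The Auslander--Buchsbaum formula then yields $\depth R/I = 2c - \projdim R/I$, so it suffices to prove that
\[
\projdim R/I = c + \max\left\{\sum_{i \in B}\zeta_i - |B| : B \ \text{is an antichain of}\ \widehat{\mathfrak d}\right\}.
\]

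First I would invoke Proposition~\ref{thm:bipPDimWeighedDepth}, which expresses $\projdim R/I = \max\{|\tau^\zeta| - l : \beta_{l,\tau}((\widehat I)^\star) \neq 0\}$, transferring the problem to the multigraded Betti numbers of the Alexander dual of the acyclic reduction $\widehat G$. The crucial point is that $\widehat G$ is a Cohen--Macaulay bipartite graph: its associated directed graph is $\widehat{\mathfrak d}$, which is acyclic and transitively closed, hence a poset, by Discussion~\ref{disc:partnIntoMaxlDirCyc}. Thus Theorem~\ref{thm:multBettiNumForDual} applies to $\widehat G$, telling us that $\beta_{l,\tau}((\widehat I)^\star) \neq 0$ precisely when $\tau = \sigma_{A,B}$ for a pair of antichains $B \subseteq A$ of $\widehat{\mathfrak d}$, with $l = |B|$.

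The main computation is then to evaluate $|\tau^\zeta| - l$ for $\tau = \sigma_{A,B}$. Writing $\sigma_{A,B} = \prod_{a \not\succcurlyeq A} u_a \prod_{a\succcurlyeq A}v_a \prod_{a\in B}u_a$ on the vertices of $\widehat G$, I would observe that $\{a : a\not\succcurlyeq A\}$ and $\{a : a\succcurlyeq A\}$ partition $[t]$, and that $B\subseteq A$ is disjoint from $\{a : a\not\succcurlyeq A\}$. Applying the weighting $\tau \mapsto \tau^\zeta$ and using $\sum_{a=1}^t \zeta_a = c$ (as the $\mathcal Z_a$ partition $[c]$), this gives $|\tau^\zeta| = c + \sum_{a\in B}\zeta_a$, and hence $|\tau^\zeta| - l = c + \sum_{a\in B}\zeta_a - |B|$.

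Finally, since this value depends only on $B$ and every antichain $B$ occurs in an admissible pair (take $A = B$), the maximum over pairs $B\subseteq A$ collapses to a maximum over antichains $B$ of $\widehat{\mathfrak d}$, which produces the displayed formula for $\projdim R/I$ and hence the corollary. I expect the only delicate step to be the degree bookkeeping in computing $|\sigma_{A,B}^\zeta|$; in particular, one must keep straight which variables $u_a, v_a$ occur in $\sigma_{A,B}$ and respect the convention of Theorem~\ref{thm:multBettiNumForDual}, where the roles of the $x_i$ and $y_j$ are interchanged relative to~\cite{HeHiCMbip05}. Everything else is a direct assembly of the cited results.
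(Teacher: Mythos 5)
Your proposal is correct and follows essentially the same route as the paper: Auslander--Buchsbaum plus Proposition~\ref{thm:bipPDimWeighedDepth} reduces the problem to computing $\max\{|\tau^\zeta|-l\}$ over the nonvanishing multigraded Betti numbers of $(\widehat I)^\star$, which Theorem~\ref{thm:multBettiNumForDual} identifies with the multidegrees $\sigma_{A,B}$, and the same degree count gives $|\sigma_{A,B}^\zeta| = c + \sum_{a\in B}\zeta_a$. Your extra remark that the maximum over pairs $B\subseteq A$ collapses to a maximum over antichains $B$ (via $A=B$) is a point the paper leaves implicit, relying on the converse direction of Theorem~\ref{thm:multBettiNumForDual} that follows from the explicit Herzog--Hibi resolution.
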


\begin{proof}
Let $\widehat G, S, \widehat I$ be as in
Discussion~\ref{disc:partnIntoMaxlDirCyc}. From
Theorem~\ref{thm:multBettiNumForDual}, we know that if
$\beta_{l,\sigma}((\widehat I)^\star) \neq 0$ for some multidegree $\sigma
\subseteq \{u_1, \ldots, u_t, v_1, \ldots, v_t\}$, then $\sigma =
\sigma_{A,B}$ for some antichains $B \subseteq A$ of $\widehat{\mathfrak
d}$, with $|B| = l$. Now, in $S$, $\deg \sigma_{A,B} = \sum_{i \succcurlyeq
A} \zeta_i + \sum_{i \not \succcurlyeq A} \zeta_i + \sum_{i \in B} \zeta_i
= c + \sum_{i \in B} \zeta_i$. Hence 
\[
\reg (\widehat I)^\star = c + \max \left\{\sum_{i \in B} \zeta_i - |B| : B
\ \text{is an antichain of} \ \widehat{\mathfrak d} \right\}.
\]
Note that $\depth R = \dim R = 2c$. Now apply 
Proposition~\ref{thm:bipPDimWeighedDepth}, followed by the
Auslander-Buchsbaum formula, to obtain the conclusion.
\end{proof}

The above proof also shows that if $G$ is a bipartite graph such that $R/I$
satisfies Serre's condition $\serreS2$ (defined, \textit{e.g.},
in~\cite[Section~2.1]{BrHe:CM}) then $G$ is Cohen-Macaulay. For, if $R/I$
satisfies $\serreS2$, then it is unmixed and $I^\star$ is linearly
presented, \textit{i.e.}, the non-zero entries in any matrix giving a
presentation of $I^\star$ has linear entries. This is a special case
of~\cite[Corollary~3.7]{YanaSRringsDuality00}. It follows, with the
notation of the proof, that for all antichains $A \neq \varnothing$ of
$\widehat{\mathfrak d}$, and for all $a \in A$, $\deg \sigma_{A, \{a\}} = c
+ \zeta_a = c+1$, giving that every strong component of $\mathfrak d_G$ has
exactly one element. In other words, $G$ is Cohen-Macaulay. We can now
prove Theorem~\ref{thm:unmBipDepthWeakVersion}.
{\def\thethm{\ref{thm:unmBipDepthWeakVersion}}
\begin{thm}
\unmBipDepthWeakVersion
\end{thm}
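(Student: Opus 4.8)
The plan is to deduce the inequality directly from the exact formula for $\depth R/I$ furnished by Corollary~\ref{thm:depthBdInTheZeta}, so that no further homological input is needed. Writing $c = \height I$ and recalling that the strong components $\mathcal Z_1, \ldots, \mathcal Z_t$ partition $[c]$, we have $\sum_{i=1}^t \zeta_i = c$. The formula reads $\depth R/I = c - M$, where $M = \max\{\sum_{i \in B}\zeta_i - |B| : B \text{ is an antichain of } \widehat{\mathfrak d}\}$, so proving $\depth R/I \geq t$ is equivalent to proving $M \leq c - t$.

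First I would rewrite the quantity being maximized as $\sum_{i \in B}\zeta_i - |B| = \sum_{i \in B}(\zeta_i - 1)$, valid since $B \subseteq [t]$. The crucial observation is then that every strong component is nonempty, so $\zeta_i = |\mathcal Z_i| \geq 1$ and hence each summand $\zeta_i - 1$ is nonnegative. Enlarging the index set from an arbitrary antichain $B \subseteq [t]$ to all of $[t]$ can therefore only increase the sum, giving $\sum_{i \in B}(\zeta_i - 1) \leq \sum_{i=1}^t (\zeta_i - 1) = c - t$ for every antichain $B$. Taking the maximum over $B$ yields $M \leq c - t$, and substituting back gives $\depth R/I = c - M \geq c - (c - t) = t$, as required.

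I do not expect a genuine obstacle here: all the substantive work has already been absorbed into Corollary~\ref{thm:depthBdInTheZeta}, and what remains is the elementary estimate above, whose only content is the nonnegativity of $\zeta_i - 1$. The one point worth stating carefully is the identity $\sum_{i=1}^t \zeta_i = c$, which rests on the fact that the strong components of $\mathfrak d_G$ form a partition of its vertex set $[c]$, as noted in Discussion~\ref{disc:partnIntoMaxlDirCyc}.
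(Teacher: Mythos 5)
Your proposal is correct and is essentially the paper's own argument: both deduce the bound from Corollary~\ref{thm:depthBdInTheZeta} together with $\sum_{i=1}^t \zeta_i = c$ and the observation that $\zeta_i \geq 1$, the only difference being that you phrase the elementary inequality as $\sum_{i \in B}(\zeta_i - 1) \leq \sum_{i=1}^t(\zeta_i - 1) = c - t$ while the paper writes the equivalent $t - |B| \leq \sum_{i \notin B} \zeta_i$. No gap.
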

\addtocounter{thm}{-1}}

\begin{proof}
To show that $\depth R/I \geq t$, it suffices to show that, for all
antichains $B$ of $\widehat{\mathfrak d}$, $t + \sum_{i \in B} \zeta_i -
|B| \leq c$. Since $c = \sum_{i=1}^t \zeta_i$, it suffices to show that $t
- |B| \leq \sum_{i \not \in B} \zeta_i$, which is true since $\zeta_i \geq
  1$ for all $i$.
\end{proof}

\begin{remark}
\label{rmk:unmBipWithDepthBdEq}
The above bound is sharp. Given positive integers $t
\leq c$, and a poset $\widehat{\mathfrak d}$ on $t$ vertices, we can find
an unmixed bipartite graph $G$ on the vertex set $V = V_1 \bigsqcup V_2$
with edge ideal $I$ such that $|V_1| = |V_2| = c$ and $\depth \Bbbk[V]/I =
t$. Choose any antichain
$B$ in $\widehat{\mathfrak d}$ and set $\zeta_i = 1$ for all $i \not \in
B$. Choose $\zeta_i \geq 1, i \in B$ such that $\sum_{i \in B}  \zeta_i =
c-t+|B|$. Now construct a directed graph $\mathfrak d$ on $c$ vertices by
replacing the vertex $i$ of $\widehat{\mathfrak d}$ by directed cycle of
$\zeta_i$ vertices and then taking its transitive closure. Label the
vertices of $\mathfrak d$ with $[c]$. Let $G$ be a
bipartite graph on $V = \{x_1, \ldots, x_c\} \bigsqcup \{y_1, \ldots,
y_c\}$ such that $x_iy_i$ is an edge for all $i \in [c]$ and $x_iy_j$ is an
edge whenever $ij$ is a directed edge of $\mathfrak d$. Then $G$ is an
unmixed graph. We know from the corollary that $t \leq \depth R/I \leq c -
\sum_{i \in B} \zeta_i - |B| = t$.
\end{remark}

\section{Arithmetic Rank}
\label{sec:arithRk}

The two statements of Theorem~\ref{thm:compatLinznImpliesSTCI} will be
proved separately in Proposition~\ref{thm:araBreveIPlusDiff} and in
Proposition~\ref{thm:compatLinznImpliesSTCICMCase}.

\begin{discussion}
\label{disc:unmBipBreveGraph}
Let $G$ be an unmixed bipartite graph on $\{x_1, \ldots, x_c\} \bigsqcup
\{y_1, \ldots, y_c\}$. Adopt the notation of
Discussion~\ref{disc:partnIntoMaxlDirCyc}. Choose an acyclic
transitively closed subgraph of $\mathfrak d_G$ which is maximal under
inclusion of edge sets; call it $\breve{\mathfrak d}$. It is a poset, with
the order induced from $\mathfrak d_G$. We will denote this order by
$\vartriangleright$ to avoid confusion with $\succ$.  (Recall that $\succ$
does not define a partial order if $G$ is not Cohen-Macaulay.) Let
$\breve{G}$ be the Cohen-Macaulay bipartite graph on $\{x_1, \ldots, x_c\}
\bigsqcup \{y_1, \ldots, y_c\}$ corresponding to $\breve{\mathfrak d}$;
denote its edge ideal by $\breve{I}$. 
\hfill\qedsymbol
\end{discussion}

\begin{propn}
\label{thm:araBreveIPlusDiff}
With notation as above, $\arank I \leq \arank \breve{I} + \projdim R/I -
\height I$.
\end{propn}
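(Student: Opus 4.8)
The plan is to reduce the statement to a Schmitt--Vogel type grouping of the monomials that generate $I$ but not $\breve I$, and then to carry out that grouping combinatorially.

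First I would analyze the difference between $G$ and $\breve G$. Since $\mathfrak d_G$ is transitively closed and each strong component $\mathcal Z_a$ is strongly connected, $\mathfrak d_G|_{\mathcal Z_a}$ is the complete digraph on $\mathcal Z_a$; maximality of $\breve{\mathfrak d}$ forces $\breve{\mathfrak d}|_{\mathcal Z_a}$ to be a total order (a chain) on the $\zeta_a$ vertices, and forces $\breve{\mathfrak d}$ to contain every inter-component edge (these create no directed cycles). Hence, writing a component's chain as $p_1 \vartriangleleft \cdots \vartriangleleft p_{\zeta_a}$, the generators of $I$ that do not lie in $\breve I$ are exactly the \emph{backward} monomials $x_{p_\beta} y_{p_\alpha}$ with $\alpha < \beta$ in a common $\mathcal Z_a$. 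Using Corollary~\ref{thm:depthBdInTheZeta} together with the Auslander--Buchsbaum formula (and $\depth R = 2c$) I would record that $\projdim R/I - \height I = d$, where $d := \max\{\sum_{i\in B}(\zeta_i - 1) : B \in \mathcal A_{\widehat{\mathfrak d}}\}$.

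Next I would set up a relative Schmitt--Vogel argument. Fix elements $g_1, \ldots, g_m$ with $m = \arank \breve I$ and $\sqrt{(g_1, \ldots, g_m)} = \breve I$. It then suffices to partition the backward monomials into $d$ groups $Q_1, \ldots, Q_d$ so that for every $\ell$ and every pair of distinct $p, p' \in Q_\ell$, the product $pp'$ lies in $\breve I$ or is divisible by a monomial in some $Q_{\ell'}$ with $\ell' < \ell$. Granting this, set $\tilde q_\ell = \sum_{p \in Q_\ell} p$. An induction on $\ell$, based on the identity $p\,\tilde q_\ell = p^2 + \sum_{p' \neq p} p p'$ in which every cross term lies in $\sqrt{(g_1,\dots,g_m, \tilde q_1, \ldots, \tilde q_{\ell-1})}$, shows that every backward monomial lies in $\sqrt{(g_1,\dots,g_m, \tilde q_1, \ldots, \tilde q_d)}$. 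Since $\breve I \subseteq \sqrt{(g_1,\dots,g_m)}$ and all $g_i, \tilde q_\ell \in I$, this gives $\sqrt{(g_1, \ldots, g_m, \tilde q_1, \ldots, \tilde q_d)} = I$, whence $\arank I \le m + d$.

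The crux is producing the grouping with exactly $d$ groups. The elementary divisibility checks I would verify are: for two backward monomials of the same gap $\beta-\alpha$ in one component, the monomial $x_{p_\beta} y_{p_{\alpha'}}$ (with $\alpha < \alpha'$) divides their least common multiple and is either a smaller-gap backward monomial or a generator of $\breve I$; for backward monomials in two \emph{comparable} components $a, b$, a suitable inter-component edge (which lies in $\breve I$) divides the least common multiple; and for two \emph{incomparable} components no such divisor exists. Thus, taking the $\zeta_a - 1$ gap-classes of each component as vertices, the ``cannot-share-a-group'' graph is obtained from the incomparability graph of $\widehat{\mathfrak d}$ by blowing up each vertex $a$ into a clique of size $\zeta_a - 1$. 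This graph is perfect (incomparability graphs of posets are perfect, and substituting cliques preserves perfection), so its chromatic number equals its clique number, namely $\max_{B \in \mathcal A_{\widehat{\mathfrak d}}} \sum_{a \in B}(\zeta_a - 1) = d$; after ordering the colours so that within each component the gap-classes are increasing, the divisibility condition holds and the required $d$ groups are obtained. The main obstacle I anticipate is precisely this last step: bringing the group count down to $d$ rather than the naive $\sum_a(\zeta_a-1) = c-t$ hinges on correctly deciding when backward monomials from different components may be merged, and the direction bookkeeping in the divisibility claims (which cross edges actually belong to $\breve I$) is where errors are easiest to make.
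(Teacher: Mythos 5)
Your overall strategy is sound and its ingredients check out: the edges of $G$ outside $\breve G$ are exactly the ``backward'' monomials $x_{p_\beta}y_{p_\alpha}$ ($\alpha<\beta$) inside strong components (maximality of $\breve{\mathfrak d}$ does force a total order on each $\mathcal Z_a$ and forces all inter-component edges into $\breve{\mathfrak d}$); the count $d=\projdim R/I-\height I$ follows from Corollary~\ref{thm:depthBdInTheZeta} and Auslander--Buchsbaum exactly as you say; and the conflict analysis (incomparable components never share a group, comparable ones always may, same-gap pairs drop to a strictly smaller gap or into $\breve I$) is correct. The paper's route is close but not identical: it partially orders the individual backward edges by $x_jy_i>x_{j'}y_{i'}$ iff $j\vartriangleright j'$ and $i\vartriangleright i'$ strictly, bounds the antichains of that poset by $\xi=\projdim R/I-\height I$, covers it by $\xi$ chains via Dilworth, and then verifies $\sqrt{\breve I+(h_1,\ldots,h_\xi)}=I$ by a direct argument with a prime not containing $I$, rather than by a Schmitt--Vogel divisibility induction. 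Your groups are in fact a special case of that chain cover: two same-gap backward edges of one component are comparable in the paper's poset, so each gap class is a chain there, and a union of gap classes over a chain of components is again a chain; your appeal to perfection of blown-up incomparability graphs is Dilworth for the substituted poset.

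The genuine gap is the step you flagged: ``after ordering the colours so that within each component the gap-classes are increasing.'' For an arbitrary optimal colouring no \emph{global} reordering of the colour labels achieves this: take two comparable components $a,b$ with $\zeta_a=\zeta_b=3$, so $d=2$, and the proper colouring putting $(a,1),(b,2)$ in one class and $(a,2),(b,1)$ in the other; the two required orders of the colours are opposite. The repair is easy but must be made explicit: since the gap classes of a component form a clique, they receive a set $S_a$ of $\zeta_a-1$ distinct colours, and one may reassign, \emph{within each component independently}, the $g$-th gap class to the $g$-th smallest element of $S_a$. The sets $S_a$ are unchanged, so the colouring remains proper, and now the natural order of the colour labels is increasing in the gap inside every component --- which is what your induction needs (in particular $Q_1$ then contains only gap-$1$ classes, so the base case lands in $\breve I$). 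One small bookkeeping correction: for $x_jy_i$ with $i,j\in\mathcal Z_b$ and $x_{j'}y_{i'}$ with $i',j'\in\mathcal Z_a$ and $b\succ a$, the edge of $\breve G$ dividing the product is $x_{j'}y_i$; the monomial $x_jy_{i'}$ is not an edge of $G$ at all. With these two points settled, your proof is complete.
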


\begin{proof}
On the set $\{x_jy_i : j \vartriangleright i, j \neq i\ \text{and} \ x_jy_i
\ \text{is an edge of} \ G\}$, define a partial
order: $x_jy_i > x_{j'}y_{i'}$ whenever $j \vartriangleright j', j \neq j',
i \vartriangleright i', i \neq i'$. Call this poset $P$. (These are the
edges of $G$ that do not belong to $\breve{G}$. If $x_jy_i$ is such an
edge, then $i$ and $j$ belong to the same strong component of $\mathfrak
d_G$.) We now claim that every
antichain in $P$ has at most $\max \left\{\sum_{a \in B} \zeta_a - |B| : B
\ \text{is an antichain of} \ \widehat{\mathfrak d} \right\}$ elements;
this quantity, as we note from Corollary~\ref{thm:depthBdInTheZeta}, equals
$\xi \defeq \projdim R/I - \height I$. Let $\{x_{j_k}y_{i_k} : 1 \leq k
\leq l\}$ with $j_k \vartriangleright i_k, 1 \leq k \leq l$ be an antichain
in $P$. First, there exist $a_1, \ldots, a_l$ such that $i_k,
j_k \in \mathcal Z_{a_k}$; this arises from the fact that $j_k
\vartriangleright i_k$. If $a_{k_2} \succneqq a_{k_1}$, then for, $i, j \in
\mathcal Z_{a_{k_1}}$ and $i', j' \in \mathcal Z_{a_{k_2}}$, $x_{j'}y_{i'}
> x_jy_i$, so if $a_{k_2} \neq a_{k_1}$, then they are incomparable.
Therefore, to prove the claim, it suffices to show that if $a_1 = \ldots =
a_l = a$, say, then $l \leq \zeta_a-1$. This follows easily, for, in this
case, any antichain in $P$ can contain at most one edge for each value of
$j-i$, and $1 \leq j-i \leq \zeta_a-1$. Moreover, let $B$ be an antichain
of $\widehat{\mathfrak d}$ for which the maximum is attained.  For all $a
\in B$, set $j_a$ to be the maximal element of $\mathcal Z_a$ under
$\vartriangleright$. Then $\{x_{j_a}y_i : i \in \mathcal Z_a, a \in B\}$ is
an antichain of $P$ with $\xi$ elements. Using Dilworth's
theorem~\cite[p.~413]{WestGraphTheory96}, we cover $P$ with $\xi$
chains, $\mathcal C_1, \ldots, \mathcal C_\xi$. For $1 \leq k \leq \xi$,
set $h_k \defeq \sum_{x_jy_i \in \mathcal C_k} x_jy_i$.

Our final claim is that $\sqrt{\breve{I} + (h_1, \ldots, h_\xi)} =
I$. The $h_l$ belong to $I$ and $\breve I \subseteq I$, so it suffices to
show that $I \subseteq \mathfrak p$ for every $\mathfrak p \in \Spec R$
such that $\breve{I} + (h_1, \ldots, h_\xi) \subseteq \mathfrak p$. Let
$\mathfrak p$ be such, and, by way of contradiction, assume that $x_jy_i
\in I \setminus \mathfrak p$; since $\breve{I} \subseteq \mathfrak p$, $j
\vartriangleright i$. First, we may also assume that for all $i' \neq i, i
\vartriangleright i'$, if $x_jy_{i'} \in I$, then $y_{i'} \in \mathfrak p$,
and similarly, that
for all $j' \neq j, j \vartriangleright j'$, if $x_{j'}y_i \in I$, then
$x_{j'} \in \mathfrak p$. Secondly, $i$ and $j$ belong to the same strong
component of $\mathfrak d_G$; let $a$ be such that $i, j \in \mathcal Z_a$.
Let $\mathcal C_l$ be chain of $P$ containing $x_jy_i$. For all $b
\succneqq a$ and $j' \in \mathcal Z_b$, $x_jy_{j'} \in \breve{I} \subseteq
\mathfrak p$, so $y_{j'} \subseteq \mathfrak p$.  Similarly, for all $b
\precneqq a$ and $i' \in \mathcal Z_b$, $x_{i'}y_i \in \breve{I} \subseteq
\mathfrak p$, so $x_{i'} \subseteq \mathfrak p$. We can thus conclude that
if $x_{j'}y_{i'} \in \mathcal C_l$ and $(i,j) \neq (i',j')$, then
$x_{j'}y_{i'} \in \mathfrak p$. Therefore $x_jy_i \in \mathfrak p$,
contradicting the choice of $x_jy_i$.
\end{proof}

On $\naturals^2$, we define a poset by setting $(a,b) \geq (c,d)$ if $a
\geq c$ and $b \geq d$. Let $(P, \geq)$, be a finite poset on a vertex set
$W_1$. We say that $P$ can be \emph{embedded} in $\naturals^2$ if there
exists a map $\phi : W \longrightarrow \naturals^2$ such that all $i, j \in
W$, $j \geq i$ if and only if $\phi(j) \geq \phi(i)$; such a map $\phi$
will be called an \emph{embedding} of $P$ in $\naturals^2$. We will denote
the projection of $\naturals^2$ along the first co-ordinate by $\pi$.

\begin{defn}
\label{disc:rowAndColOrderingPosetsDefn}
Let $(P, \succcurlyeq)$ be a finite poset on a finite vertex set $W$, with
an embedding $\phi$ in $\naturals^2$. Then there is a unique $i_0 \in
W$ such that $i_0$ is minimal in $P$ and $(\pi \circ \phi)(i_0)$ is
minimum. Similarly, let $j_0$ be the unique maximal element such that 
$(\pi \circ \phi)(j_0)$ is minimum. Let $P_1$ and $P_2$ be the
restrictions of $P$ respectively to $W \setminus \{i_0\}$ and $W \setminus
\{j_0\}$.  The \emph{column linearization} of $P$ induced by $\phi$
is the map $\gamma : W \longrightarrow [|W|]$ defined recursively as follows:
\[
\gamma(i) = 
\begin{cases}
1, & i = i_0 \\
1+ \gamma_1(i), & i \neq i_0 \\
\end{cases}
\]
where $\gamma_1$ is a column linearization of $P_1$ induced by $\phi$.
A \emph{row linearization} of $P$ induced by $\phi$
is the map $\rho : W \longrightarrow [|W|]$ defined recursively as follows: \[
\rho(j) = 
\begin{cases}
1, & j = j_0 \\
1+ \rho_1(j), & j \neq j_0 \\
\end{cases}
\]
where $\rho_1$ is a row linearization of $P_2$ induced by $\phi$. We will
say that $(\gamma, \rho)$ is the pair of linearizations induced by $\phi$.
\hfill\qedsymbol
\end{defn}

\begin{propn}
\label{thm:RowColLinznsInduceEmbedding}
Let $P$, $\phi$, $\gamma$ and $\rho$ be as in
Definition~\ref{disc:rowAndColOrderingPosetsDefn}. For $i, j \in P$, if $j
\succcurlyeq i, j \neq i$, then $\gamma(j) > \gamma(i)$ and $\rho(j) <
\rho(i)$. If $i$ and
$j$ are incomparable, then $\gamma(j) > \gamma(i)$ if and only if $\rho(j)
> \rho(i)$.
\end{propn}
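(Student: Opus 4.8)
The plan is to reinterpret $\gamma$ and $\rho$ as greedy extraction orders and then reduce both assertions to a single ordering rule. Unwinding the recursion in Definition~\ref{disc:rowAndColOrderingPosetsDefn} (each sub-poset inherits the restriction of $\phi$ as an embedding, so the required minimal/maximal elements exist and are unique at every stage), $\gamma$ is the function that lists $W$ by repeatedly deleting $i_0$, the unique minimal element of least first coordinate, and recording the step at which each element is deleted; dually, $\rho$ records the deletion steps when one repeatedly deletes $j_0$, the unique maximal element of least first coordinate. Writing $a_k = (\pi\circ\phi)(k)$, I first record two facts coming from the embedding: $\phi$ is injective (if $\phi(i)=\phi(j)$ then $j\succcurlyeq i$ and $i\succcurlyeq j$, so $i=j$), and any two incomparable elements have distinct first coordinates, since $a_i=a_j$ together with injectivity forces the second coordinates to be comparable and hence $i,j$ comparable. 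Thus for incomparable $i\neq j$ I may assume $a_i<a_j$, and then the second coordinates satisfy the reverse strict inequality.

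For the comparable case, suppose $j\succ i$. In the column process one deletes a minimal element at each step, and $j$ is never minimal while $i\prec j$ remains present; since every element is eventually deleted, $i$ is deleted first, giving $\gamma(i)<\gamma(j)$. The same argument applied to the row process (where one deletes maximal elements and $i$ is never maximal while $j\succ i$ is present) gives $\rho(j)<\rho(i)$. This is exactly the first assertion.

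The heart of the matter is the incomparable case, where I claim both $\gamma$ and $\rho$ order $i,j$ by first coordinate; granting this, $\gamma(j)>\gamma(i)\iff a_j>a_i\iff\rho(j)>\rho(i)$, which is the second assertion. Assume $a_i<a_j$. For $\gamma$, suppose toward a contradiction that $j$ is deleted before $i$; at that step $i$ is still present and $j$ is the current minimal of least first coordinate. Choosing a minimal element $m$ of the set of present elements that are $\preccurlyeq i$, one checks $m$ is minimal in the whole current poset, and $m\preccurlyeq i$ gives $a_m\leq a_i<a_j$, a present minimal of strictly smaller first coordinate than $j$, contradicting the choice of $j$. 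Hence $\gamma(i)<\gamma(j)$. For $\rho$, suppose toward a contradiction that $j$ is deleted before $i$ in the row process, and choose a maximal element $n$ of the set of present elements that are $\succcurlyeq i$. Then $n$ is maximal in the current poset and the second coordinate of $\phi(n)$ is at least that of $\phi(i)$, hence strictly greater than that of $\phi(j)$; if also $a_n\geq a_j$, then $\phi(n)\geq\phi(j)$ and $n\succ j$, contradicting maximality of $j$, so $a_n<a_j$, again a present maximal of strictly smaller first coordinate than $j$, contradicting the choice of $j$. Hence $\rho(i)<\rho(j)$, completing the claim.

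The step I expect to be the only real obstacle is the $\rho$ half of the incomparable case: because the tie-breaking rule (least first coordinate) is \emph{not} the order-dual of maximal-element extraction, the down-set argument that works cleanly for $\gamma$ does not transport directly, and one is forced to use the second coordinate of $\phi$ --- that is, the two-dimensionality of the embedding --- to rule out $a_n\geq a_j$. Everything else is bookkeeping on the recursion in Definition~\ref{disc:rowAndColOrderingPosetsDefn}.
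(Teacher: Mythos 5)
Your proof is correct and follows essentially the same route as the paper's: both interpret $\gamma$ and $\rho$ as greedy deletion orders and show that comparable pairs are ordered by the poset relation while incomparable pairs are ordered (in both linearizations) by the first coordinate of $\phi$. The only difference is that you supply the details the paper leaves implicit --- in particular the second-coordinate argument ruling out $a_n \geq a_j$ in the $\rho$ half of the incomparable case, which is indeed the one step that genuinely uses the embedding in $\naturals^2$.
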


\begin{proof}
If $j \succcurlyeq i$, then $\phi(j) \geq \phi(i)$. In the recursive
definition of $\gamma$, $i$ would appear as the unique minimal vertex with
the smallest value of $(\pi \circ \phi)$ before $j$ would, so $\gamma(i) <
\gamma(j)$. On the other hand, while computing $\rho$ recursively, $j$
would appear as the unique maximal vertex with the smallest value of $(\pi
\circ \phi)$ before $i$ would, so $\rho(j) < \rho(i)$. On the other hand,
if $i$ and $j$ are incomparable, then we may assume without loss of
generality that $(\pi \circ \phi)(i) < (\pi \circ \phi)(j)$. Hence, while
computing $\gamma$ and $\rho$ recursively, $i$ will be chosen before $j$,
giving $\gamma(i) < \gamma(j)$ and $\rho(i) < \rho(j)$.
\end{proof}

\begin{discussion}
\label{disc:defnQuadFormsSTCI}
Let $P$ be a poset on a finite set $W$, with an embedding $\phi$ in
$\naturals^2$. Let $(\gamma, \rho)$ be the pair of linearizations of
$P$ induced by $\phi$. Let $E = \{(\gamma(i), \rho(j)) : j \succcurlyeq
i \in W\} \subseteq \reals^2$. We think of $E$ as
a subset of $[|W|] \times [|W|]$ in the first quadrant of the Cartesian
plane. Let $i, j$ be such that $(\gamma(i), \rho(j)) \in E$ is not the
lowest vertex in its column, \textit{i.e.}, there exists $l$ such that
$(\gamma(i), \rho(l))$ lies below $(\gamma(i), \rho(j))$. Then $j
\succcurlyeq i$, $l \succcurlyeq i$ and, from
Proposition~\ref{thm:RowColLinznsInduceEmbedding}, $l \neq i$. Therefore,
again from Proposition~\ref{thm:RowColLinznsInduceEmbedding}, $\gamma(l) >
\gamma(i)$ and $(\gamma(i), \rho(l))$ is not the right-most vertex in its
row.  Let $k$ be such that $(\gamma(k), \rho(l))$ lies immediately to the
right of $(\gamma(i), \rho(l))$ in its row. Draw an edge between
$(\gamma(i), \rho(j))$ and $(\gamma(k), \rho(l))$. Repeating this for all
$j \succcurlyeq i$ such that $(\gamma(i), \rho(j))$ is not the lowest
vertex in its column, we obtain a graph $\Gamma$ on $E$. Rows and columns
of $\Gamma$ will be indexed starting from the bottom left corner.
\hfill\qedsymbol
\end{discussion}

\begin{lemma}
\label{thm:compatLinznGammaCpnts}
With notation as in Discussion~\ref{disc:defnQuadFormsSTCI},
$\Gamma$ has exactly $|W|$ connected components.
\end{lemma}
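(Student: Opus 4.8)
The claim is that the graph $\Gamma$ constructed in Discussion~\ref{disc:defnQuadFormsSTCI} on the vertex set $E \subseteq [|W|] \times [|W|]$ has exactly $|W|$ connected components. The plan is to analyze the construction of edges very carefully, because each edge was drawn by a deterministic rule: starting from a vertex $(\gamma(i),\rho(j))$ that is \emph{not} the lowest in its column, we connect it to $(\gamma(k),\rho(l))$, where $(\gamma(i),\rho(l))$ is the vertex immediately below $(\gamma(i),\rho(j))$ and $(\gamma(k),\rho(l))$ is the vertex immediately to its right in row $\rho(l)$. Let me first count vertices and edges. The number of edges equals the number of vertices that are \emph{not} the lowest in their column, since the rule draws exactly one edge for each such vertex. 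Equivalently, the number of edges is $|E|$ minus the number of columns of $\Gamma$ that are nonempty.

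**Identifying components with columns.** Since $\Gamma$ has $|E|$ vertices and (number of edges) $= |E| - (\text{number of nonempty columns})$, a standard count shows that \emph{if} $\Gamma$ is a forest (has no cycles), then the number of connected components equals $|E| - (\text{number of edges}) = \text{number of nonempty columns}$. So the first thing I would establish is that the nonempty columns of $\Gamma$ are in bijection with $W$: a column indexed by $\gamma(i)$ is nonempty precisely when there is some $j$ with $(\gamma(i),\rho(j)) \in E$, i.e.\ some $j \succcurlyeq i$; but $i \succcurlyeq i$ always holds, so every value $\gamma(i)$, $i \in W$, occurs as a column index. Since $\gamma$ is a bijection $W \to [|W|]$, there are exactly $|W|$ nonempty columns (indeed all $|W|$ columns are nonempty). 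Thus it remains to prove $\Gamma$ is a forest, after which the count $\text{components} = |W|$ follows.

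**The forest property — the main obstacle.** The crux is therefore to show $\Gamma$ is acyclic; this is the step I expect to be the hardest and where the geometry of the embedding $\phi$ and Proposition~\ref{thm:RowColLinznsInduceEmbedding} must be used. The key structural feature is that each edge of $\Gamma$ goes from a vertex to one that is strictly lower (smaller $\rho$-value) and strictly to the right (larger $\gamma$-value): indeed $(\gamma(k),\rho(l))$ has $\rho(l) < \rho(j)$ (it lies below) and $\gamma(k) > \gamma(i)$ (it lies to the right, as noted in the Discussion). I would orient each edge from its upper-left endpoint $(\gamma(i),\rho(j))$ to its lower-right endpoint $(\gamma(k),\rho(l))$ and argue that every vertex has \emph{out-degree at most one}: the edge drawn out of $(\gamma(i),\rho(j))$ is uniquely determined by the rule, and no edge is ever drawn twice. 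A directed graph in which every vertex has out-degree $\le 1$ and all edges strictly decrease the $\rho$-coordinate can contain no cycle (a cycle would force $\rho$ to strictly decrease around a loop), so the underlying undirected graph is a forest provided we also check no two vertices are joined by a multi-edge and that the out-degree-one structure rules out undirected cycles. The delicate point to verify is that distinct source vertices cannot produce the same edge and that the ``immediately to the right'' and ``immediately below'' choices are well defined and single-valued, so that the functional (out-degree $\le 1$) structure genuinely holds; once that is pinned down, acyclicity and hence the forest property follow, completing the proof that $\Gamma$ has exactly $|W|$ components.
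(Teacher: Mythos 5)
Your proof is correct, but it takes a genuinely different route from the paper's. You establish that $\Gamma$ is a forest and then read off the component count from the Euler characteristic: every vertex that is not lowest in its column emits exactly one edge, each edge strictly decreases the row index from source to target (so there are no loops, and distinct sources give distinct edges, whence $\Gamma$ has exactly $|E|-|W|$ edges, all $|W|$ columns being nonempty because $i \succcurlyeq i$), and the acyclicity you flag as the remaining check does close easily --- in any undirected cycle a vertex of maximal row index would have both incident cycle-edges oriented away from it, contradicting out-degree at most one. Note that your bookkeeping ends up indexing the components by the \emph{columns} of $\Gamma$: each tree contains exactly one column-bottom vertex. The paper argues differently: it shows that the top-left vertex of any connected component must be the left-most vertex of its row, and then counts the $|W|$ rows, thereby indexing components by \emph{rows} via an explicit canonical representative. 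That representative is exactly what is invoked later, in the proof of Proposition~\ref{thm:compatLinznImpliesSTCICMCase}, where the components are described as ``the component containing the left-most vertex in row $t$''; so your argument yields the count (plus the extra structural fact that $\Gamma$ is a forest), but if one follows your route one should still separately record the paper's row-representative observation before using it downstream.
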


\begin{proof}
Suppose that $C$ is a connected component of $\Gamma$ and that $(\gamma(i),
\rho(j))$ is the top left vertex of $C$. We claim that it is the left-most
vertex in its row. For, if not, then there exists $k$ such that
$(\gamma(k), \rho(j))$ lies immediately to the left of $(\gamma(i),
\rho(j))$. From Proposition~\ref{thm:RowColLinznsInduceEmbedding}, $k \neq
j$.  We note, again from Proposition~\ref{thm:RowColLinznsInduceEmbedding},
that $(\gamma(k), \rho(j))$ is not the top-most vertex in its column,
contradicting the hypothesis that that $(\gamma(i), \rho(j))$ is the top
left vertex of $C$.  Now, there are exactly $|W|$ rows in $\Gamma$.
\end{proof}

\begin{lemma}
\label{thm:FirstTwoColsUnderCompatLinzn}
Let $G$ be a Cohen-Macaulay bipartite graph such that $\phi$ is an
embedding of $\mathfrak d_G$ in $\naturals^2$. Let $(\gamma, \rho)$ be the
pair of linearizations induced by $\phi$. Then the vertices in the first
column of $\Gamma$ belong to a contiguous set of rows, starting with row
$1$.
%If $\gamma^{-1}(2) \not \succcurlyeq \gamma^{-1}(1)$, then
%$\rho(\gamma^{-1}(2)) > \rho(\gamma^{-1}(1))$, and the vertices in the
%second column belong to a contiguous set of rows, starting with row $1$,
%except in the row $\rho(\gamma^{-1}(1))$. If $\gamma^{-1}(2) \succcurlyeq
%\gamma^{-1}(1)$, then $\rho(\gamma^{-1}(2)) < \rho(\gamma^{-1}(1))$, and
%the vertices in the second column belong to a contiguous set of rows,
%starting with row $1$.
\end{lemma}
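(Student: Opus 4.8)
The plan is to translate the statement into a claim about the linearizations and then reduce everything to a single coordinate inequality. Since $\gamma$ is a bijection $W \to [|W|]$ with $\gamma(i_0)=1$, the first column of $\Gamma$ consists exactly of the vertices $(\gamma(i_0),\rho(j)) = (1,\rho(j))$ lying in $E$, i.e.\ those with $j \succcurlyeq i_0$. As $\rho$ is likewise a bijection onto $[|W|]$ and every $j$ contributes the vertex $(\gamma(j),\rho(j)) \in E$, the rows of $\Gamma$ are indexed precisely by the values of $\rho$, with row $1$ the bottom row. Thus the rows met by the first column form the set $\{\rho(j) : j \succcurlyeq i_0\}$, and, writing $U = \{j \in W : j \succcurlyeq i_0\}$, the lemma is equivalent to $\rho(U) = \{1,2,\ldots,|U|\}$, which in turn is equivalent to showing $\rho(u) < \rho(w)$ whenever $u \in U$ and $w \notin U$.

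First I would record the coordinate fact underlying everything: by the choice of $i_0$, the value $(\pi\circ\phi)(i_0)$ is the minimum of $\pi\circ\phi$ over all of $W$ (the element of smallest first coordinate may be taken minimal in $P$, and two minimal elements with equal first coordinate would be comparable, so it is forced to be $i_0$). Consequently, for $u \in U$ and $w \notin U$, the relations $\phi(u) \geq \phi(i_0)$ and $\phi(w) \not\geq \phi(i_0)$, together with $(\pi\circ\phi)(w) \geq (\pi\circ\phi)(i_0)$, force the second coordinate of $\phi(w)$ to be strictly smaller than that of $\phi(i_0)$, and hence than that of $\phi(u)$.

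Next I would run the case analysis on the pair $u,w$ using Proposition~\ref{thm:RowColLinznsInduceEmbedding}. If $w \succcurlyeq u$, then $w \succcurlyeq i_0$ and so $w \in U$, a contradiction, so this case does not occur. If $u \succ w$, the proposition gives $\rho(u) < \rho(w)$ directly. The remaining, and main, case is when $u$ and $w$ are incomparable; here the proposition reduces $\rho(u) < \rho(w)$ to the equivalent inequality $\gamma(u) < \gamma(w)$. To establish the latter, I would first observe that incomparability of $\phi(u),\phi(w)$ combined with the second-coordinate inequality above forces $(\pi\circ\phi)(u) < (\pi\circ\phi)(w)$. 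Then I would argue inside the recursive construction of $\gamma$: if, for contradiction, $w$ were linearized before $u$, then at the stage $w$ is peeled $u$ still remains, and some minimal element $u'$ of the remaining poset satisfies $u' \preccurlyeq u$; since $w$ is by definition the minimal vertex of least first coordinate at that stage, one gets $(\pi\circ\phi)(w) \leq (\pi\circ\phi)(u') \leq (\pi\circ\phi)(u)$, contradicting $(\pi\circ\phi)(u) < (\pi\circ\phi)(w)$. Hence $\gamma(u) < \gamma(w)$, closing the case.

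Combining the three cases yields $\rho(u) < \rho(w)$ for all $u \in U$ and $w \notin U$, whence $\rho(U) = \{1,\ldots,|U|\}$ and the first column meets exactly rows $1$ through $|U|$. I expect the incomparable case to be the main obstacle: the order-theoretic Proposition~\ref{thm:RowColLinznsInduceEmbedding} only converts it into a statement about $\gamma$, and closing the argument requires descending into the peeling recursion and exploiting that $\phi$ embeds $\mathfrak d_G$ in $\naturals^2$ with $i_0$ of globally minimal first coordinate; without this geometric input one cannot exclude $w$ being linearized before $u$.
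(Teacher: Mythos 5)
Your proof is correct. The reformulation of contiguity as $\rho(u) < \rho(w)$ for all $u \succcurlyeq i_0$ and $w \not\succcurlyeq i_0$ is right, the three-case analysis is exhaustive, and the incomparable case is closed soundly: since $(\pi\circ\phi)(i_0)$ is the global minimum of the first coordinate, $\phi(w) \not\geq \phi(i_0)$ forces the second coordinate of $\phi(w)$ strictly below that of $\phi(i_0)$ and hence of $\phi(u)$, incomparability then gives $(\pi\circ\phi)(u) < (\pi\circ\phi)(w)$, and the peeling recursion converts this into $\gamma(u) < \gamma(w)$, whence $\rho(u) < \rho(w)$ by Proposition~\ref{thm:RowColLinznsInduceEmbedding}. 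The paper's proof takes a shorter route that avoids the general pair $(u,w)$ entirely: by Proposition~\ref{thm:RowColLinznsInduceEmbedding}, $\rho(j) < \rho(i_0)$ for every $j \succ i_0$, so $\rho(i_0)$ is already the \emph{largest} occupied row of the first column, and contiguity reduces to the single family of comparisons $\rho(i) > \rho(i_0)$ for $i \not\succcurlyeq i_0$. Because $i_0 = \gamma^{-1}(1)$ is minimal, such an $i$ is automatically incomparable to $i_0$, and $\gamma(i_0) = 1 < \gamma(i)$ holds for free, so the incomparable clause of Proposition~\ref{thm:RowColLinznsInduceEmbedding} finishes immediately, with no appeal to the geometry of the embedding or to the internals of the recursion. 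Your argument proves the stronger pairwise statement and is self-contained, but at the cost of re-deriving, for an arbitrary incomparable pair, the link between first coordinates and the $\gamma$-order that the paper only needs (and gets trivially) at the distinguished element $\gamma^{-1}(1)$.
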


\begin{proof}
We may assume that the labelling of $\mathfrak d_G$ is such
that $\gamma^{-1}(1)  = 1$ and $\gamma^{-1}(2) = 2$. We need to show that
$\rho(i) > \rho(1)$ if $i \not \succcurlyeq 1$.
Proposition~\ref{thm:RowColLinznsInduceEmbedding} gives that $1$ is minimal in
$\mathfrak d_G$. Let $i \not \succcurlyeq 1$. Then $i$ and $1$ are
incomparable. Since $\gamma(1) = 1 \leq \gamma(i)$, we see,
again from Proposition~\ref{thm:RowColLinznsInduceEmbedding},
that $\rho(i) > \rho(1)$.
\end{proof}

\begin{remark}
\label{rmk:compatLinznRestrictionSTCI}
Let $P$ be a poset on a finite vertex set $W$ with an embedding $\phi$ in
$\naturals^2$. Let $(\gamma, \rho)$ be the pair of linearizations of $P$ induced
by $\phi$. Let $W' = W \setminus \{\gamma^{-1}(1)\}$ and let $P'$ be the
restriction of $P$ to $W'$. Then $\phi|_{W'}$ is an embedding of $P'$ in
$\naturals^2$. For $i \in W'$, set $\gamma'(i) = \gamma(i) - 1$, and 
\[
\rho'(i) = 
\begin{cases}
\rho(i), & i \succcurlyeq \gamma^{-1}(1) \\
\rho(i)-1, & \text{otherwise}.
\end{cases}
\]
Then $(\gamma', \rho')$ is the pair of linearizations induced by
$\phi|_{W'}$. Let $\Gamma'$ be the graph constructed from $P'$ as described
in Discussion~\ref{disc:defnQuadFormsSTCI} using $\gamma'$ and $\rho'$.
Then $\Gamma'$ is obtained by deleting the vertices in the first column of
$\Gamma$. We see this as follows. For all $i, j \in W'$, $\rho(i) <
\rho(j)$ if and only if $\rho'(i) < \rho'(j)$; similarly, $\gamma(i) <
\gamma(j)$ if and only if $\gamma'(i) < \gamma'(j)$. Further, there is only
one vertex in row $\rho(\gamma^{-1}(1))$ in $\Gamma$, and this is in the
first column.
\end{remark}

\begin{remark}
\label{rmk:compatLinznRestrictionSTCIColon}
Let $P$ be a poset on a finite vertex set $W$ with an embedding $\phi$ in
$\naturals^2$. Let $(\gamma, \rho)$ be the pair of linearizations induced
by $\phi$. Let $W' = W \setminus \gamma^{-1}(1)$ and let
$P'$ be the restriction of $P$ to $W'$. Then $\phi|_{W'}$ is an embedding
of $P$ in $\naturals^2$. Let $\tilde \gamma$ be the
order-preserving map from $\image \gamma|_{W'}$ to $[|W'|]$. 
Let $\gamma' \defeq \tilde \gamma \circ \gamma|_{W'}$.
For $j \in W'$, set $\rho'(j) = \rho(j) - \rho(1)$.
Then $(\gamma', \rho')$ is the pair of linearizations of $P'$
induced by $\phi|_{W'}$. Let $\Gamma'$ be the graph constructed from $P'$
as described in Discussion~\ref{disc:defnQuadFormsSTCI} using $\tilde
\gamma \circ \gamma|_{W'}$ and $\tilde \rho \circ \rho|_{W'}$. We claim
that $\Gamma'$ is the graph obtained from $\Gamma$ by deleting the vertices
that lie in rows $\rho(j)$ for some $j \succcurlyeq \gamma^{-1}(1)$. For,
first observe that for all $i, j \in W'$, $\rho(i) < \rho(j)$ if and only
if $\rho'(i) < \rho'(j)$; similarly, $\gamma(i) < \gamma(j)$ if and only if
$\gamma'(i) < \gamma'(j)$. Moreover, for all $j \succcurlyeq
\gamma^{-1}(1)$, the vertices in the column $\gamma(j)$ belong to rows
between $1$ and $\rho(j)$ (possibly, not all of them). Therefore, after the
vertices in the rows between $1$ and $\rho(1)$ have been deleted, the
remaining vertices belong to columns $\gamma(j)$ for $j \not \succcurlyeq
1$. Hence $(\gamma'(i), \rho'(j))$ and $(\gamma'(k), \rho'(l))$ belong
to the same connected component of $\Gamma'$ if and only if $(\gamma(i),
\rho(j))$ and $(\gamma(k), \rho(l))$ belong to the same connected component
of $\Gamma$.
\end{remark}

\begin{example}
\label{example:posetExampleOne}
We wish to illustrate these constructions with an example of a
Cohen-Macaulay bipartite graph. Let $G$ be the Cohen-Macaulay bipartite
graph on the vertex set $\{x_1, y_1, \ldots, x_7, y_7\}$ such that the
poset $\mathfrak d_G$ has the cover relations (\textit{i.e.}, chains that
cannot be further refined) $3 \succ 1$, $3 \succ 2$ $4 \succ 1$, $4 \succ
2$, $5 \succ 2$, $6 \succ 3$, $6 \succ 4$, $7 \succ 4$ and $7 \succ 5$.
Table~\ref{tab:exampleEmb} gives the embedding $\phi$, the functions
$\gamma$ and $\rho$ and the graph $\Gamma$.
We take the sum of the monomials corresponding to the vertices in a
connected component of $\Gamma$:
\begin{align*}
g_1 & = x_1y_6, \qquad g_2 = x_2y_6 + x_1y_3, &
g_3 & = x_3y_6 + x_2y_3 + x_1y_7, \\
g_4 & = x_4y_6 + x_3y_3 + x_2y_7 + x_1y_4, &
g_5 & = x_6y_6 + x_4y_7 + x_2y_4 + x_1y_1, \\
g_6 & = x_5y_7 + x_4y_4 + x_2y_5, &
g_7 & = x_7y_7 + x_5y_5 + x_2y_2.\\
\end{align*}
Let $J = (g_1, \ldots, g_7)$. In the proof of 
Proposition~\ref{thm:compatLinznImpliesSTCICMCase} we will see that $I =
\sqrt{J}$.
\hfill\qedsymbol
\end{example}
\begin{table}
\caption{Example~\protect{\ref{example:posetExampleOne}}}
\label{tab:exampleEmb}
\begin{minipage}[c]{0.495 \textwidth}
\centering
\begin{tabular}{|c|ccc|}
\hline
$i$ & $\phi(i)$ & $\gamma(i)$ & $\rho(i)$ \cr
\hline
$1$ & $(0,2)$ & $1$ & $5$ \cr
$2$ & $(1,0)$ & $2$ & $7$ \cr
$3$ & $(2,5)$ & $3$ & $2$ \cr
$4$ & $(3,3)$ & $4$ & $4$ \cr
$5$ & $(5,1)$ & $6$ & $6$ \cr
$6$ & $(4,6)$ & $5$ & $1$ \cr
$7$ & $(6,4)$ & $7$ & $3$ \cr
\hline
\end{tabular}
\end{minipage}
\hfill
\begin{minipage}[c]{0.495 \textwidth}
\centering
\setlength{\unitlength}{2pt}
\begin{picture}(74, 74)
\put(1,9){$y_6$}
\put(1,19){$y_3$}
\put(1,29){$y_7$}
\put(1,39){$y_4$}
\put(1,49){$y_1$}
\put(1,59){$y_5$}
\put(1,69){$y_2$}
\put(9,1){$x_1$}
\put(19,1){$x_2$}
\put(29,1){$x_3$}
\put(39,1){$x_4$}
\put(49,1){$x_6$}
\put(59,1){$x_5$}
\put(69,1){$x_7$}
\put(10,10){\circle*{1}}
\put(10,20){\circle*{1}\line(1,-1){10}}
\put(10,30){\circle*{1}\line(1,-1){10}}
\put(10,40){\circle*{1}\line(1,-1){10}}
\put(10,50){\circle*{1}\line(1,-1){10}}
\put(20,10){\circle*{1}}
\put(20,20){\circle*{1}\line(1,-1){10}}
\put(20,30){\circle*{1}\line(1,-1){10}}
\put(20,40){\circle*{1}\line(2,-1){20}}
\put(20,60){\circle*{1}\line(1,-1){20}}
\put(20,70){\circle*{1}\line(4,-1){40}}
\put(30,10){\circle*{1}}
\put(30,20){\circle*{1}\line(1,-1){10}}
\put(40,10){\circle*{1}}
\put(40,30){\circle*{1}\line(1,-2){10}}
\put(40,40){\circle*{1}\line(2,-1){20}}
\put(50,10){\circle*{1}}
\put(60,30){\circle*{1}}
\put(60,60){\circle*{1}\line(1,-3){10}}
\put(70,30){\circle*{1}}
\end{picture}
\end{minipage}
\vskip 1em
\end{table}

Before we prove the second assertion of
Theorem~\ref{thm:compatLinznImpliesSTCI}, we observe that the directed
graph associated to $\breve G$ (which we denoted by $\breve{\mathfrak d}$
in Discussion~\ref{disc:unmBipBreveGraph}) has an embedding in
$\naturals^2$ if and only if the acyclic reduction $\widehat{\mathfrak
d}$ of $\mathfrak d_G$ has an embedding in $\naturals^2$. The proof of this
is easy, and is omitted.

\begin{propn}
\label{thm:compatLinznImpliesSTCICMCase}
Let $G$ be an unmixed bipartite graph. If a maximal transitively closed and
acyclic subgraph of $\mathfrak d_G$ can be embedded in $\naturals^2$, then
$\arank I = \projdim R/I$.
\end{propn}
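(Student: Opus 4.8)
The plan is to establish the inequality $\arank I \geq \projdim R/I$ from the general lower bound and then produce an explicit set of $\projdim R/I$ equations cutting out $I$ up to radical, which forces equality. Since by Lyubeznik's result $\arank I \geq \projdim R/I$ always holds for square-free monomial ideals, the entire content is the upper bound $\arank I \leq \projdim R/I$. The strategy is to reduce to the Cohen-Macaulay case and then use the combinatorial machinery developed in Discussions~\ref{disc:defnQuadFormsSTCI} and the linearization results. First I would invoke Proposition~\ref{thm:araBreveIPlusDiff}, which gives $\arank I \leq \arank \breve I + \projdim R/I - \height I$; since $\breve G$ is Cohen-Macaulay with $\height \breve I = \height I = c$, it suffices to show that $\arank \breve I \leq \height I = c$, \emph{i.e.}, that $\breve I$ (equivalently, the Cohen-Macaulay ideal) is a set-theoretic complete intersection when its poset embeds in $\naturals^2$. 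The remark preceding this proposition guarantees that $\breve{\mathfrak d}$ embeds in $\naturals^2$ exactly when $\widehat{\mathfrak d}$ does, so the hypothesis transfers to $\breve G$.

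For the Cohen-Macaulay case, the approach is to exhibit $c$ polynomials whose radical is $I$, using the graph $\Gamma$ on $E \subseteq [c]\times[c]$ from Discussion~\ref{disc:defnQuadFormsSTCI}. By Lemma~\ref{thm:compatLinznGammaCpnts}, $\Gamma$ has exactly $|W| = c$ connected components, and Example~\ref{example:posetExampleOne} shows the intended recipe: for each connected component $C_k$ of $\Gamma$, set $g_k \defeq \sum_{(\gamma(i),\rho(j)) \in C_k} x_j y_i$, summing the monomials $x_j y_i$ over the lattice points $(\gamma(i),\rho(j))$ lying in that component (these correspond to the pairs $j \succcurlyeq i$, \emph{i.e.}, the edges and loops of $\breve G$). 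I would then set $J = (g_1,\ldots,g_c)$ and claim $\sqrt J = I$. Each $g_k$ lies in $I$ since every monomial summand $x_j y_i$ with $j \succcurlyeq i$ is an edge of $\breve G$, so $J \subseteq I$ and hence $\sqrt J \subseteq I$. The reverse containment $I \subseteq \sqrt J$ is the crux: I would take any prime $\mathfrak p \supseteq J$ and show $I \subseteq \mathfrak p$ by a descending induction on the components, arguing that within each connected component of $\Gamma$ the monomials are \emph{linearly ordered} by the row/column structure, so that a prime containing the sum $g_k$ together with the knowledge that all but the ``extremal'' monomial already lie in $\mathfrak p$ forces that extremal generator into $\mathfrak p$ as well.

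The main obstacle, and the heart of the argument, is proving $I \subseteq \sqrt J$, and this is exactly where the compatibility of the pair of linearizations $(\gamma,\rho)$ with the embedding $\phi$ (Proposition~\ref{thm:RowColLinznsInduceEmbedding}) does the work. The key geometric fact is that the edges of $\Gamma$ connect a vertex $(\gamma(i),\rho(j))$ to the vertex $(\gamma(k),\rho(l))$ immediately to the right in the row below it, so traversing a connected component moves monotonically down-and-right; within one component the monomials $x_jy_i$ are thereby arranged so that any one of them can be isolated modulo the others. To make the induction self-similar I would peel off the first column of $\Gamma$: Lemma~\ref{thm:FirstTwoColsUnderCompatLinzn} shows the first column occupies a contiguous block of rows starting at row~$1$, and Remarks~\ref{rmk:compatLinznRestrictionSTCI} and~\ref{rmk:compatLinznRestrictionSTCIColon} show that deleting the vertex $\gamma^{-1}(1)$ (corresponding to passing to $(I:x)$ or $(I,x)$ for the appropriate variable) reproduces the same $\Gamma$-construction on the restricted poset $P'$. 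Thus I would set up the induction on $c = |W|$, using the short exact sequences relating $R/I$ to $R/(I:x)$ and $R/(I,x)$ as in Lemma~\ref{thm:varSetRestrictColon}, and apply the inductive hypothesis to the strictly smaller posets $P'$; the base case $c=1$ is a single edge, trivially a complete intersection. The delicate bookkeeping is verifying that the component structure of $\Gamma$ restricts correctly under these reductions (precisely the content of the two remarks), so that the generators $g_k$ restrict to valid generators for the smaller ideal and the radical equality is preserved at each step.
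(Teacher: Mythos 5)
Your proposal is correct and follows essentially the same route as the paper: reduce to the Cohen--Macaulay case via Proposition~\ref{thm:araBreveIPlusDiff} and Lyubeznik's lower bound, take one polynomial per connected component of $\Gamma$, and prove $\sqrt J = I$ by induction on $c$, splitting on whether $x_{\gamma^{-1}(1)}$ lies in a given prime and invoking Remarks~\ref{rmk:compatLinznRestrictionSTCI} and~\ref{rmk:compatLinznRestrictionSTCIColon} to identify $(J,x_1)$ and $(J:x_1)$ with the $\Gamma$-construction on the smaller posets. (The only quibble is notational: the monomial attached to $(\gamma(i),\rho(j))$ with $j \succcurlyeq i$ should be $x_iy_j$, not $x_jy_i$.)
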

\begin{proof}
Let $\breve{\mathfrak d}$ be a maximal acyclic subgraph of $\mathfrak d_G$
with the property that $\breve{\mathfrak d}$ can be embedded in
$\naturals^2$. Construct $\breve{G}$ as in
Discussion~\ref{disc:unmBipBreveGraph}. Let $\breve{I}$ be its edge ideal.
Observe that $\breve{G}$ is Cohen-Macaulay and $\height \breve I = \height
I = c$. Suppose that the conclusion of the proposition holds for
Cohen-Macaulay graphs. Then $\arank \breve I = \projdim R/\breve I =
\height I$. Using Proposition~\ref{thm:araBreveIPlusDiff} and the fact that
$\arank I \geq \projdim R/I$ (\cite[Proposition ~3]{LyubArithRk88}), we
conclude that $\arank I = \projdim R/I$. Hence it suffices to prove the
assertion in the Cohen-Macaulay case. Assume, therefore, that $G$ is
Cohen-Macaulay.

Denote the embedding of $\mathfrak d_G$ by $\phi$, and let $(\gamma, \rho)$
be pair of linearizations induced by $\phi$. Let $\Gamma$ be the graph
constructed as in Discussion~\ref{disc:defnQuadFormsSTCI}. We prove the
theorem by induction on $c$. Since the conclusion is evident when
$c=1$, we assume that $c>1$ and that it holds for all Cohen-Macaulay
bipartite graphs on fewer than $2c$ vertices. For $t = 1, \ldots, c$, let
$C_t$ be the connected component of $\Gamma$ containing the left most
vertex in row $t$. We saw in the proof of
Lemma~\ref{thm:compatLinznGammaCpnts}
that these are exactly 
the connected components of $\Gamma$. Set 
\[
g_t = \sum_{(\gamma(i), \rho(j)) \in C_t} x_iy_j\qquad  1 \leq t \leq c.
\]
Set $J = (g_1, \ldots, g_c)$. We will show that $I = \sqrt J$, or,
equivalently, that for all $\mathfrak p \in \Spec R$, $I \subseteq
\mathfrak p$ if and only if $J \subseteq \mathfrak p$. (This gives that
$\arank I \leq c = \projdim R/I = \height I$, but we have already noted
that $\arank I \geq = \projdim R/I$.) Further, without
loss of generality, we may assume that $\gamma^{-1}(1) = 1$. Then $1$ is a
minimal element of $\mathfrak d_G$. Let $W_1 \defeq \{2, \ldots, c\}$ and
$W_2 \defeq \{i \not \succcurlyeq 1\} \subseteq [c]$. Let $\mathfrak d_1$
and $\mathfrak d_2$ respectively be the restrictions of $\mathfrak d_G$ to
$W_1$ and $W_2$. 

Let $G_1$ be the deletion of $x_1$ and $y_1$ in $G$, whose edge ideal (in
$R = \Bbbk[V]$) is $((I, x_1) \cap \Bbbk[x_2, y_2, \ldots, x_c, y_c])R$.
Note that $\mathfrak d_1$ is the associated directed graph of $G_1$. Let
$\Gamma_1$ denote the deletion of the vertices that lie in the first column
of $\Gamma$. Write $J_1 = ((J, x_1) \cap \Bbbk[x_2,
y_2, \ldots, x_c, y_c])R$. We see from
Remark~\ref{rmk:compatLinznRestrictionSTCI} that 
that $J_1$ is defined from $\Gamma_1$
precisely the same way that $J$ is defined from $\Gamma$. Along with
the induction hypothesis, this gives that $((I, x_1) \cap \Bbbk[x_2, y_2,
\ldots, x_c, y_c])R = \sqrt{J_1}$. Note that $(J_1, x_1) = (J,x_1)$, so we
obtain that $(I, x_1) = \sqrt{(J,x_1)}$. We thus see that for all
$\mathfrak p \in \Spec R$ such that $x_1 \in \mathfrak p$, $I \subseteq
\mathfrak p$ if and only if $J \subseteq \mathfrak p$.

Let $G_2$ be the deletion of $x_1$ and all its neighbours in $G$; its edge
ideal is $((I:x_1) \cap \Bbbk[x_i, y_i: i \in W_2])R$. The associated
directed graph of $G_2$ is $\mathfrak d_2$. Let $\Gamma_2$ denote the
deletion of the vertices that lie in columns $\gamma(i)$ or in rows
$\rho(i)$ of $\Gamma$ whenever $i \succcurlyeq 1$. Let
\[
J_2 = ((J + (y_i : i \succcurlyeq 1)) \cap
\Bbbk[x_i, y_i : i \not \succcurlyeq 1])R.
\]
From Remark~\ref{rmk:compatLinznRestrictionSTCI}, we note that 
$J_2$ is defined from $\Gamma_2$ precisely the same way that $J$ is
defined from $\Gamma$. This, along with the induction
hypothesis, implies that $((I:x_1) \cap \Bbbk[x_i, y_i: i \in W_2])R = 
\sqrt{J_2}$. Now, $J_2 + (y_i : i \succcurlyeq 1) = J + (y_i : i
\succcurlyeq 1) = (J:x_1)$, so $(I:x_1) = \sqrt{(J:x_1)}$.  We thus see
that for all $\mathfrak p \in \Spec R$ such that $x_1 \not \in \mathfrak
p$, $I \subseteq \mathfrak p$ if and only if $J \subseteq \mathfrak p$.
Together with the previous paragraph, we conclude that $\sqrt J = I$.
\end{proof}

\section*{Acknowledgments}
This work was done as part of the author's dissertation at the University
of Kansas, under the direction of C.~Huneke. He thanks Huneke and J.~Martin
for helpful discussions, and the referees for their comments. The computer
algebra system \texttt{Macaulay2} by D.~Grayson and M.~Stillman provided
valuable assistance in studying examples.

%\bibliographystyle{amsalpha}
%\bibliography{kummini}

\def\cfudot#1{\ifmmode\setbox7\hbox{$\accent"5E#1$}\else
  \setbox7\hbox{\accent"5E#1}\penalty 10000\relax\fi\raise 1\ht7
  \hbox{\raise.1ex\hbox to 1\wd7{\hss.\hss}}\penalty 10000 \hskip-1\wd7\penalty
  10000\box7}
\providecommand{\bysame}{\leavevmode\hbox to3em{\hrulefill}\thinspace}
\providecommand{\MR}{\relax\ifhmode\unskip\space\fi MR }
% \MRhref is called by the amsart/book/proc definition of \MR.
\providecommand{\MRhref}[2]{%
  \href{http://www.ams.org/mathscinet-getitem?mr=#1}{#2}
}
\providecommand{\href}[2]{#2}

\end{document}